\numberwithin{equation}{section}
\theoremstyle{plain}
\newtheorem{theorem}{Theorem}[section]
\newtheorem{lemma}[theorem]{Lemma}
\newtheorem{corollary}[theorem]{Corollary}
\newtheorem{proposition}[theorem]{Proposition}
\theoremstyle{definition}
\newtheorem{definition}[theorem]{Definition}
\newtheorem{remark}[theorem]{Remark}
\newtheorem{?}[theorem]{Problem}
\newcommand{\ovl}{\overline}
\newcommand{\udb}{\underbrace}
\newcommand{\imp}{\Rightarrow}
\newcommand{\ints}{\mathbb{Z}}
\newcommand{\nat}{\mathbb{N}}
\newcommand{\union}{\cup}
\newcommand{\intersect}{\cap}
\newcommand{\inv}{^{-1}}
\newcommand{\Y}{(X^\$ \times X^\$)^*}
\newcommand{\Z}{({X^\$})^*}
\newcommand{\X}{{X^\$}}
\newcommand{\vphi}{\varphi}
\renewcommand{\L}{\mathcal{L}}
\newcommand{\padL}{L^\$}
\begin{document}

\title[Language convexity]{Formal language convexity in left-orderable groups}

\author[HL Su]{Hang Lu Su}

\address{Dpto. de Matem\'aticas, Universidad Aut\'onoma de Madrid and Instituto de
Ciencias Matem\'aticas, CSIC-UAM-UC3M-UCM.}
\email{homeowmorphism@gmail.com}
\date{}

\keywords{left-orderable group, regular language, finitely generated 
positive cones, semigroups, acylindrically hyperbolic groups} 

\thanks{
The author would like to thank Yago Antol\'in for his exceptionally thoughtful supervision, and for his guidance on the problems tackled in this paper. The author would also like to thank the anonymous referee for their detailed feedback and their suggestion of a different choice of transversal for Proposition \ref{H-pres}, which simplified the rest of section 5. The author has received funding from ``la Caixa" Foundation (ID 100010434) with fellowship code LCF/BQ/IN17/11620066, from the European Union's Horizon 2020 research and innovation programme under the Marie Sk\l{}odowska-Curie grant agreement No. 713673, and from the Severo Ochoa Programme for Centres of Excellence in R\&D (SEV-20150554).}

\begin{abstract} We propose a criterion for preserving the regularity of a formal language representation when passing from groups to subgroups.  We use this criterion to show that the regularity of a positive cone language in a left-orderable group passes to its finite index subgroups, and to show that there exists no left order on a finitely generated acylindrically hyperbolic group such that the corresponding positive cone is represented by a quasi-geodesic regular language. We also answer one of Navas' questions by giving an example of an infinite family of groups which admit a positive cone that is generated by exactly $k$ generators, for every $k \geq 3$. As a special case of our construction, we obtain a finitely generated positive cone for $F_2 \times \ints$. 
\end{abstract}

\maketitle

\section{Introduction}
A language represents a subset of a group if its image under the evaluation map is equal to that subset. The complexity of a language is determined by the minimal complexity class of machines able to solve the membership problem for that language. For example, languages recognized by finite state automata are called \emph{regular languages}. They are the simplest languages in a classification of formal languages called the Chomsky hierarchy. 

In this paper, we devise a criterion which we name \emph{language-convexity} for subgroups to inherit the regularity of a language representation of a group. Roughly speaking, a subgroup is language-convex with respect to a language $L$ if the prefixes of every word in $L$ represents an element in the subgroup, up to a bounded error. We apply this criterion to \emph{positive cones} of groups, which are the sets of elements greater than the identity under a left-invariant total order. 

\begin{theorem}\label{thm-fi} Let $G$ be a finitely generated group with a regular positive cone. If $H$ is a finite index subgroup, then $H$ also admits a regular positive cone.
\end{theorem}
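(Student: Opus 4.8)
The plan is to equip $H$ with the left order obtained by restricting a left order on $G$ whose positive cone $P$ is regular, and then to prove that the corresponding positive cone of $H$ is regular over a finite generating set of $H$. A left-invariant total order on $G$ restricts to one on $H$, and its positive cone is precisely $P \cap H$: indeed $P \cap H$ is a subsemigroup of $H$ and $H = (P \cap H) \sqcup (P \cap H)^{-1} \sqcup \{1\}$. So it suffices to exhibit a regular language, over some finite generating set of $H$, whose evaluation image is $P \cap H$.

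Fix a finite symmetric generating set $X$ of $G$ and a regular language $L \subseteq X^*$ evaluating onto $P$. Since $[G : H] < \infty$, the set of right cosets $H \backslash G$ is finite and $G$ acts on it by right translation; the resulting coset automaton — states the cosets, initial and accepting state the coset $H$, transitions given by the $X$-action — recognizes $K := \{ w \in X^* : \overline w \in H \}$, so $K$ is regular. Hence $L' := L \cap K$ is regular, and since $L$ surjects onto $P$ its evaluation image is exactly $P \cap H$.

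It remains to move $L'$ from $X^*$ to the alphabet of a generating set of $H$. Choose a finite right transversal $T$ for $H$ in $G$ with $1 \in T$, write $\overline g \in T$ for the representative of $Hg$, and let $Y$ be the associated (finite) Reidemeister--Schreier generating set of $H$. The Reidemeister rewriting process is computed by a finite-state transducer whose states are the elements of $T$: starting in state $1$, upon reading a letter $x$ in state $t$ it outputs the Schreier generator $tx(\overline{tx})^{-1} \in H$ and transitions to state $\overline{tx}$, accepting when the final state is $1$. A telescoping computation shows that on input $w \in K$ this transducer outputs a word over $Y$ evaluating to $\overline w$. Applying this rational transduction to $L'$ yields a regular language $L'' \subseteq Y^*$ with evaluation image $P \cap H$, which is the desired regular positive cone for $H$.

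The order-theoretic content is immediate; the only point requiring care is the passage between generating sets, which rests on the closure of the class of regular languages under images of finite-state (generalized sequential) transductions, and this is where I expect the (routine) technical work to concentrate. Alternatively, once the general language-convexity criterion is in place, the middle steps are subsumed: a finite-index subgroup is coarsely dense in $G$ in the word metric, hence automatically language-convex with respect to $L$, and the criterion then delivers a regular representation of $P \cap H$ over a generating set of $H$ directly.
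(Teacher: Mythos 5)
Your proposal is correct, but it follows a genuinely different route from the paper. The paper obtains Theorem \ref{thm-fi} as a corollary of Lemma \ref{finite-index} combined with the language-convexity criterion (Proposition \ref{convex-crit}): $G$ itself is language-convex with respect to $L$ with parameter $0$, hence any finite-index subgroup is language-convex (coarse density via a finite set of coset representatives), and the criterion --- whose engine is the regular fellow-travelling language $\mathcal{L}_M$ of Proposition \ref{L-aft} and Lemma \ref{L-tilde} --- then yields a regular language representing $H \cap P$. You instead intersect $L$ with the regular set $K$ of words evaluating into $H$, recognized by the finite Schreier coset automaton (this exists precisely because $[G:H]<\infty$), and push $L\cap K$ through the Reidemeister--Schreier rewriting transducer with state set $T$, invoking closure of regular languages under rational (generalized sequential machine) transductions; the telescoping identity $\prod_{i} t_{i-1}x_i t_i^{-1} = w\,\overline{w}^{-1}$ together with $\overline{w}=1$ for $w\in K$ shows the output evaluates to $\bar w$, and Schreier's lemma shows $Y$ generates $H$ (prefix-closedness of $T$ is not needed for either point). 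Your argument is more elementary and self-contained for this particular statement: no fellow-travelling machinery is required, the resulting language lives directly over a finite generating set of $H$ (which matches the definition of a regular positive cone more explicitly than the paper's $L_H$, which is a language over the alphabet of $G$), and the implicit state bound, roughly $[G:H]\cdot|\mathcal{A}(L)|$, is sharper than the growth-function bound in the paper's corollary. What it does not buy is generality: the coset-automaton step is intrinsically a finite-index argument, whereas language-convexity also handles infinite-index subgroups such as the hyperbolically embedded copy of $F_2$ used in Section \ref{acyl}, so the paper gets the finite-index case essentially for free from machinery it needs anyway --- a trade-off your closing remark correctly identifies.
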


A particularly simple class of regular languages is the class of finitely generated semigroups, which can be recognized by automata with only two states. The property of finite index subgroups inheriting a regular language representation of a positive cone is optimal in some sense, as refining this property to inheriting finite generation in the positive cone is impossible. Indeed, take the Klein bottle group given by presentation 
$$K_2 = \langle a, b : a\inv ba = b\inv \rangle.$$ It is easy to check that $K_2$ admits the positive cone $P = \langle a,b \rangle^+$. The subgroup given by $\langle a^2, b \rangle$, of index $2$, is isomorphic to $\ints^2$ since $$a^{-2}ba^2 = a^{-1}b\inv a = b.$$
It is a basic fact of orderability (see for example \cite[Section 2.2]{GroupsOrdersDynamics}) that a finitely generated positive cone of a group corresponds to an isolated point in the space of left orders of that group. Also well-known (see \cite[Section 1.2.1]{GroupsOrdersDynamics}) is the fact that the space of left orders on $\ints^2$ is isomorphic to the Cantor set, and thus cannot have any isolated points. Thus, $\ints^2$ does not admit a finitely generated positive cone, despite being a finite index subgroup of $K_2$. 

While finitely generated positive cones are easy to describe, not many examples of them are known. In his 2011 paper, Navas \cite{Navas2011} constructs an infinite family of groups given by presentation $\Gamma_n = \langle a, b : ba^nb = a \rangle$ which have positive cones of rank $2$. The author then poses the following problem: for every $k \geq 3$, find an infinite family of groups which admit a positive cone of rank $k$. We solve this problem completely by looking into finite-index subgroups of $\Gamma_n$.  
\begin{theorem}\label{thm-inf-fam} For every integer $m \geq 2$, and integer $n \geq 2 $ of the form $n = m-1 + mt$ for some odd integer $t$, there is a subgroup of index $m$ in $\Gamma_n = \langle a, b : ba^nb = a \rangle$ which admits a positive cone of rank $m+1$. 
\end{theorem}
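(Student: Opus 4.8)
The plan is to realise $H$ as the kernel of an explicit map onto a finite cyclic group, read off a presentation on $m+1$ generators, and show that Navas's positive cone on $\Gamma_n$ restricts to one on $H$ generated by exactly those $m+1$ elements. First, since $n=m-1+mt$ we have $n\equiv -1\pmod m$, so there is a surjection $\phi\colon\Gamma_n\to\ints/m\ints$ with $\phi(a)=\phi(b)=1$: the relator $ba^nba^{-1}$ has $\phi$-image $1+n+1-1=n+1\equiv 0$. Put $H=\ker\phi$, so $[\Gamma_n:H]=m$. Reidemeister--Schreier with the transversal $\{1,a,\dots,a^{m-1}\}$ (or a more convenient one) then gives a presentation of $H$ on the $m+1$ Schreier generators $\alpha=a^m$ and $\beta_i=a^iba^{-((i+1)\bmod m)}$, $0\le i\le m-1$, together with $m$ relators obtained by rewriting the conjugates $a^iRa^{-i}$ of $R=ba^nba^{-1}$.

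Since the restriction of a left order to a subgroup is again a left order, $Q:=P\cap H$ is a positive cone on $H$, where $P$ is Navas's finitely generated positive cone on $\Gamma_n$. To bound its rank from above I would use that $R$ has $\phi$-image $0$: the relator changes the relevant letter-count datum only by multiples of $m$, so that datum is well defined modulo $m$ on $P$, and every element of $Q$ has a positive representative that factors as a product of ``length-$m$ blocks'', each again in $Q$. Rewriting each block as a word in the $m+1$ Schreier generators and their inverses, and invoking the relators of $H$ to absorb the surplus blocks (e.g.\ one has group identities such as $ba=b^2a^2(ab)$ in $\Gamma_3$), yields $Q=\langle\text{those }m+1\text{ elements}\rangle^+$ and hence $\operatorname{rank} Q\le m+1$.

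For the matching lower bound I would pass to $H^{\mathrm{ab}}$. From $\Gamma_n^{\mathrm{ab}}\cong\ints\oplus\ints/2\ints$ and the action induced on the abelianised Schreier generators, $H^{\mathrm{ab}}$ has torsion-free rank $1$, say $H^{\mathrm{ab}}\cong\ints\oplus T$ with $T$ finite; it is here that the hypothesis ``$t$ odd'' enters, pinning down $T$ and the positions of the generator images. Computing those images, some fall in the positive and some in the negative half of the $\ints$-summand while their $T$-coordinates are pairwise distinguishing, and one checks directly that no $m$ of them generate the submonoid that all $m+1$ do. Hence $\operatorname{rank} Q\ge m+1$, and combined with the previous step $\operatorname{rank} Q=m+1$.

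\textbf{Main obstacle.} The crux is the rank computation in both directions. The upper bound needs a uniform-in-$m$ understanding of how positive words in $\Gamma_n$ rewrite through the single defining relator and how this meshes with the block decomposition and the Reidemeister--Schreier relators of $H$; this is delicate and presupposes an explicit description of which words represent elements of Navas's cone $P$. The lower bound needs a genuine invariant --- the abelianisation together with the precise location of the $m+1$ generator images --- rather than a mere failure to find a smaller generating set, and it is in this bookkeeping that the parity of $t$ is used to fix the rank at exactly $m+1$.
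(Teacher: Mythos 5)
Your overall strategy (kernel of a surjection onto $\ints/m\ints$, Reidemeister--Schreier, abelianization for the lower bound) is the paper's, but both halves of your rank computation have genuine gaps.

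For the upper bound, the issue is your choice of transversal. With $T=\{1,a,\dots,a^{m-1}\}$ the Schreier generators are $a^iba^{-(i+1)}$ ($0\le i\le m-2$), $a^{m-1}b$ and $a^m$, and you never address whether these lie in Navas's cone $P=\langle a,b\rangle^+$. That inclusion is exactly what makes $\langle Y\rangle^+\subseteq H\cap P$ (and hence equality) work, and for $a^iba^{-(i+1)}$ it is not apparent. The paper sidesteps this by using the transversal $\{1,b^{-1},\dots,b^{-(m-1)}\}$ in Proposition \ref{pos-cone-gen}, whose Schreier generators all have the form $b^{-s}ab^{k}$ with $k\ge 0$ and so lie in $P$ by the identity $b^{-s}a=a(a^{n-1}b)^s$ (Lemma \ref{in-P}); the $a$-power transversal is reserved for computing the presentation in Proposition \ref{H-pres}, where positivity is irrelevant. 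Your mechanism for the reverse containment $H\cap P\subseteq\langle Y\rangle^+$ is also off: there is no need for ``length-$m$ blocks'' or for ``absorbing surplus blocks'' via the relators, and you must not use the generators' inverses (that would destroy semigroup generation). The correct observation is simply that the Schreier rewriting $\tau$ sends a positive word in $\{a,b\}$ representing an element of $H$ to a positive word in the Schreier generators, since each factor $\gamma(\overline{w_{i-1}},x_i)$ with $x_i\in\{a,b\}$ is either trivial or a generator, never an inverse.

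For the lower bound, checking that ``no $m$ of them generate the submonoid that all $m+1$ do'' only excludes proper subsets of your chosen generating set; the rank of a semigroup is a minimum over \emph{all} generating sets, so this does not give $\operatorname{rank}Q\ge m+1$. The paper's fix is short and you should adopt it: since $H=Q\sqcup Q^{-1}\sqcup\{1\}$, any semigroup generating set of $Q$ generates $H$ as a group (Lemma \ref{rk}), so $\operatorname{rank}Q\ge\operatorname{rank}H\ge\operatorname{rank}H_{\mathrm{ab}}$. One then computes from the presentation of Proposition \ref{H-pres} that $H_{\mathrm{ab}}\cong(\ints/2\ints)^m\times\ints$ when $t$ is odd (Corollary \ref{Ab-H}), which has rank $m+1$; no bookkeeping of where the generator images sit inside the $\ints$-summand is needed. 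Your intermediate claims here ($\Gamma_n^{\mathrm{ab}}\cong\ints\oplus\ints/2\ints$, torsion-free rank $1$ for $H_{\mathrm{ab}}$) are correct under the hypotheses, but they do not by themselves pin the rank at $m+1$ without the partition argument.
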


In 2016, Hermiller and \v{S}uni\'{c} \cite{HermillerSunic2017NoPC} showed that no finitely generated free product has a regular positive cone. However, this property is not stable under taking a Cartesian product with the integers. For example, Rivas \cite{Rivas2018} had constructed an example of a regular positive cone for $F_2 \times \mathbb{Z}$. Furthermore, a 2018 result of Mann and Rivas \cite{Mann2018} states that this group has isolated points in its space of left orders. Since each finitely generated positive cone implies an isolated point in the space of left orders of a group (see for example \cite[Section 2.2]{GroupsOrdersDynamics}), this naturally leads to the question of whether $F_2 \times \ints$ admits a finitely generated positive cone. A special case of our Theorem \ref{thm-inf-fam} shows the following. 

\begin{corollary}\label{main-thm2} There exists a positive cone for $F_2 \times \ints$ which is finitely generated as a semigroup.
\end{corollary}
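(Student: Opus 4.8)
The plan is to apply Theorem~\ref{thm-inf-fam} to a carefully chosen admissible pair $(m,n)$ and then to identify the resulting index-$m$ subgroup of $\Gamma_n$ as $F_2 \times \ints$; since a positive cone that is finitely generated as a semigroup is precisely one of finite rank, the positive cone of rank $m+1$ furnished by the theorem immediately yields the corollary.

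First I would record the structure of $\Gamma_n = \langle a, b \mid ba^n b = a \rangle$. Putting $c = ba^{-1}$, so $b = ca$, the defining relation becomes $c a^{n+1} c = 1$, and hence $\Gamma_n \cong \langle a, c \mid a^{n+1} = c^{-2} \rangle$. For $n$ odd, $z := a^{n+1} = c^{-2}$ generates an infinite cyclic central subgroup $Z$, and $\Gamma_n / Z \cong (\ints/(n+1)) * (\ints/2)$, a free product of finite cyclic groups, so $\Gamma_n/Z$ is virtually free; equivalently, passing to the presentation with a generator of zero exponent sum under $\Gamma_n \twoheadrightarrow \ints$ shows $\Gamma_n \cong F_{(n+1)/2} \rtimes_\phi \ints$ with $\phi$ of finite order in $\Out(F_{(n+1)/2})$. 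The key point is that if $F$ is a torsion-free finite-index subgroup of $\Gamma_n/Z$—necessarily free—then its preimage $\widetilde F \le \Gamma_n$ is a central extension of $F$ by $\ints$, and since $H^2(F;\ints)=0$ this extension splits and is therefore the direct product: $\widetilde F \cong F \times \ints$.

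I would then take $n = 3$: here $\Gamma_3/Z \cong (\ints/4) * (\ints/2)$ has Euler characteristic $\tfrac14 + \tfrac12 - 1 = -\tfrac14$, so any subgroup of index $4$ has Euler characteristic $-1$, and choosing a torsion-free one—e.g.\ the stabiliser of a point under a transitive action on four points in which the generator of $\ints/4$ acts as a $4$-cycle and the generator of $\ints/2$ as a fixed-point-free involution—gives a free group of rank $2$. Its preimage in $\Gamma_3$ is then a finite-index subgroup isomorphic to $F_2 \times \ints$ (equivalently, the subgroup of $\Gamma_3 \cong F_2 \rtimes_\phi \ints$ on which the monodromy becomes trivial). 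It remains to verify that for the admissible pair $(m,n)$ in play the transversal fixed in Proposition~\ref{H-pres} cuts out exactly this subgroup; I would do this by running the Reidemeister--Schreier rewriting with that transversal and recognising the two-generator presentation it returns as a fibered-over-$\ints$ presentation with trivial monodromy, i.e.\ as $F_2 \times \ints$.

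The step I expect to be the main obstacle is precisely this last identification. One must pin down the correct $(m,n)$ and check the subgroup is $F_2 \times \ints$ itself rather than one of the many commensurable groups $F_r \rtimes_\psi \ints$ with $\psi$ of finite but nontrivial order in $\Out(F_r)$—these are not direct products and sit at the intervening indices—so the recognition has to be done on the nose. This precision is genuinely needed: as the Klein-bottle example in the introduction shows, finite generation of a positive cone is not inherited by finite-index subgroups, so the theorem must be applied to a subgroup that is literally $F_2 \times \ints$.
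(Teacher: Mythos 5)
Your overall strategy---apply Theorem~\ref{thm-inf-fam} to some admissible pair $(m,n)$ and then recognise the resulting index-$m$ subgroup as $F_2\times\ints$---cannot be completed, because none of the subgroups furnished by that theorem is isomorphic to $F_2\times\ints$. Theorem~\ref{thm-inf-fam} concerns $H=\ker\vphi$ for the specific homomorphism $\vphi\colon\Gamma_n\to\ints/m\ints$ with $\vphi(a)=\vphi(b)=1$ and $n=m-1+mt$ with $t$ odd, and Corollary~\ref{Ab-H} computes $H_{\text{ab}}\cong(\ints/2\ints)^m\times\ints$; this torsion is precisely how the paper obtains the lower bound $m+1$ on the rank. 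Since $F_2\times\ints$ has torsion-free abelianization $\ints^3$, the identification you defer to the end (and correctly flag as ``the main obstacle'') is impossible for every admissible $(m,n)$. The failure is visible in your own central-extension picture: because $\vphi(a)=\vphi(b)$, the element $c=ba\inv$ lies in $H$, and its image in $\Gamma_n/\langle a^{n+1}\rangle\cong(\ints/(n+1)\ints)*(\ints/2\ints)$ has order $2$, so $H/\langle a^{n+1}\rangle$ is not torsion-free and in particular not free; your $n=3$ example is also not admissible, since $3=m-1+mt$ forces $(m,t)=(2,1)$ or $(4,0)$, and in the first case the index is $2$, not $4$, while the second has $t$ even.

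The paper instead deduces Corollary~\ref{main-thm2} from the more general Proposition~\ref{pos-cone-gen}, in which $\vphi(a)=\mu$ need not equal $\vphi(b)=1$: it takes $n=2$, $m=6$, $\mu=4$, obtains a positive cone for $H=\ker\vphi$ generated by $7$ explicit elements, and then identifies $H\cong F_2\times\ints$ by a Reidemeister--Schreier and Tietze-transformation computation (carried out in GAP), aided by the isomorphism $\Gamma_2\cong B_3$. Your central-extension observations are sound and could be used to repair the argument, but only in combination with Proposition~\ref{pos-cone-gen} and a choice with $\vphi(a)\neq\vphi(b)$: for instance $n=3$, $m=4$, $\mu=3$ satisfies $(n-1)\mu\equiv-2\ (\mathrm{mod}\ 4)$, sends $c=ba\inv$ to $2\neq 0$, and one checks that the induced index-$4$ subgroup of $(\ints/4\ints)*(\ints/2\ints)$ is torsion-free of Euler characteristic $-1$, hence free of rank $2$, so that $\ker\vphi\cong F_2\times\ints$ carries a $5$-generated positive cone. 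As written, however, the proposal hinges on an identification that provably fails, so it does not establish the corollary.
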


Moreover, we generalize the result of Hermiller and \v{S}uni\'{c} \cite{HermillerSunic2017NoPC} to acylindrically hyperbolic groups, which are a generalization of the class of non-elementary hyperbolic groups. Some examples of acylindrically hyperbolic groups are mapping class groups of closed, oriented surfaces, groups of outer automorphism of free groups, and free products. Our result postdates \cite{Calegari2002} that of Calegari, who showed in 2003 that no fundamental group of a hyperbolic manifold has a regular geodesic positive cone. 

\begin{theorem}\label{main-thm} A quasi-geodesic positive cone language of a finitely generated acylindrically hyperbolic group cannot be regular.
\end{theorem}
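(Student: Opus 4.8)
The plan is to reduce to the theorem of Hermiller and \v{S}uni\'c \cite{HermillerSunic2017NoPC} that no finitely generated free product admits a regular positive cone. Fix a finite generating set $S$ of the acylindrically hyperbolic group $G$ and suppose, for a contradiction, that $L \subseteq S^*$ is a regular $(\lambda,c)$-quasi-geodesic language with $\overline{L} = P$, where $P$ is the positive cone of some left order $<$ on $G$. By the structure theory of acylindrically hyperbolic groups (Dahmani--Guirardel--Osin), $G$ contains a hyperbolically embedded free subgroup $H = \langle a,b\rangle \cong F_2$; by Sisto's theorem on quasi-convexity of hyperbolically embedded subgroups, $H$ is strongly quasi-convex (equivalently Morse, or stable) in $\mathrm{Cay}(G,S)$, hence undistorted and equipped with a coarsely Lipschitz, coarsely well-defined nearest-point projection $\pi\colon G \to H$ under which quasi-geodesics with endpoints in $H$ stay uniformly close to $H$. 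The restriction of $<$ to $H$ is again a left order, with positive cone $P_H := P \cap H$.

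The technical heart of the argument, and the only place the quasi-geodesic hypothesis is used, is to show that $P_H$ is a regular subset of $H$ with respect to $\{a,b\}$; this is precisely a ``language-convex subgroup'' situation in the sense of the present paper. Because $H$ is strongly quasi-convex there is a uniform $K = K(\lambda,c,H)$ such that every word of $L$ whose value lies in $H$ labels a path contained in the $K$-neighbourhood of $H$; applying $\pi$ vertex by vertex pushes such a path to a path in $H$ of uniformly bounded step size, hence to a word over $\{a^{\pm1},b^{\pm1}\}$ up to bounded correction. I would then turn this into a finite-state construction: run the finite automaton recognising $L$, and use its finitely many states together with the bounded error in $\pi$ to read off, on the fly, an $\{a^{\pm1},b^{\pm1}\}$-word tracking $\pi$ applied to the successive prefixes; the resulting language is regular, and by the preceding remarks its image is exactly $P_H$. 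Once $P_H$ is exhibited as a regular positive cone for $H \cong \ints * \ints$, it contradicts \cite{HermillerSunic2017NoPC}, and we are done.

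I expect the middle step to be the real obstacle. The delicate points are: the set $\{\,w\in L : \overline w\in H\,\}$ is in general not regular, so one must project to $H$ rather than intersect with it; the bounded excursions of the path of $w$ off $H$ must be controlled uniformly enough to be absorbed by a finite automaton; and one must check that the quasi-geodesic constants of $L$, the (strong) quasi-convexity constant of $H$, and the state count of the automaton for $L$ assemble into a single explicit bound on the step size of $\pi(\widehat w)$. That non-elementarity is genuinely needed here --- rather than, say, mere existence of a single Morse element --- is already visible from the cyclic case, where $a^{*}$ is a perfectly good regular quasi-geodesic positive cone language for $\ints$; it is the presence of a non-abelian free, hyperbolically embedded, hence ``order-rigid'', subgroup that does the work.
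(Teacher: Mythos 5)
Your proposal is correct and follows essentially the same route as the paper: a hyperbolically embedded $F_2$ from Dahmani--Guirardel--Osin (Lemma \ref{F_2-embed}), Sisto's Morse property to get language-convexity of that subgroup with respect to the quasi-geodesic language (Lemma \ref{H-L}), and then a finite-state ``bounded-error tracking'' construction producing a regular positive cone for $F_2$, contradicting Hermiller--\v{S}uni\'c. The technical middle step you flag as the real obstacle is exactly the language-convexity criterion (Proposition \ref{convex-crit}, via the fellow-traveller language of Proposition \ref{L-aft} and Lemma \ref{L-tilde}), which the paper has already established, so your sketch carries through as written.
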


Since it is known that the lower bound of being 1-counter (the lowest complexity for a context-free language) is attained for some orders on free groups by a 2013 result of \v{S}uni\'{c} \cite{Sunic2013}, our bound is the best possible within the Chomsky hierarchy. In 2006, Farb posed the question of whether mapping class groups of closed, oriented surfaces of genus greater or equal to two, with either zero or one puncture, have a finite index subgroup which is left-orderable \cite[Problem 6.3]{Farb2006}. Our theorem makes partial progress on Farb's question by finding a lower bound on the formal language complexity for positive cones of acylindrically hyperbolic groups which are represented by languages containing only quasi-geodesic words. (Note that finite index subgroups of acylindrically hyperbolic groups are acylindrically hyperbolic \cite{Osin2013}.)

Our paper is structured as follows. We review some background in Section \ref{lang-bg}, which should be sufficient to understand the starting point of our paper: Hermiller and \v{S}uni\'{c}'s result \cite{HermillerSunic2017NoPC}. In Section \ref{fellow-travel}, we show how pairs of fellow-travelling words form a regular language. In Section \ref{convex}, we present the language-convexity criterion for subsets to inherit the property of having a regular language representation, then prove Theorem \ref{thm-fi}. The last two sections can be read independently of one another. In Section \ref{inf-fam}, we first provide an $(m+1)$-generated positive cone for certain subgroups of index $m$ of $\Gamma_n = \langle a,b : ba^nb = a \rangle$ for integers $n \geq 2, m \geq 2$. We then use this result to prove Corollary \ref{main-thm2}. Next, we show that $m+1$ is the minimal number of generators for the provided positive cones of these subgroup of index $m$ of $\Gamma_n$, for an infinite number of values of $n$, proving Theorem \ref{thm-inf-fam}. In Section \ref{acyl}, we apply our language-convexity criterion to known results in acylindrically hyperbolic groups to show that if there were  a regular quasi-geodesic positive cone language  for an acylindrically hyp{}erbolic group, it would allow us to construct a regular positive cone for the free group on two elements. This is a contradiction by the result of Hermiller and \v{S}uni\'{c} \cite{HermillerSunic2017NoPC} (Theorem \ref{herm-sunic}). 

\section{Background}\label{lang-bg}
The goal of this section is to present sufficient background to understand the starting point of this paper which is the statement of Theorem \ref{herm-sunic} due to Hermiller and \v{S}uni\'{c}. We end the section by briefly discussing Corollary \ref{main-thm2} and Theorem \ref{main-thm}. Should it be needed, we suggest additional reference \cite{GroupsOrdersDynamics} for orderability, and reference \cite{Epstein1992} for finite state automata.

\subsection{Left-orders and positive cones}\label{ord}
A group G is \emph{left-orderable} if it admits a strict total order $\prec$ of its elements such that the relation 
$g \prec h$ holds if and only if the relation $fg \prec fh$ holds for all $g,h$ and $f$ in $G$. Given an order $\prec$, the associated positive cone $P$ is the set of elements which are greater than the identity, $$P = \{g \in G : g \succ 1\}.$$$
P$ has two defining properties:
\begin{enumerate}
	\item $P$ is a semigroup in the sense that $PP \subseteq P$.
	\item $P$ defines a partition for $G$, $G = P \sqcup P\inv \sqcup \{1\}$ where the union is disjoint.
\end{enumerate} 

Equivalently, given a semigroup $P$ which partitions $G$ as above, we may define a left-invariant order $\prec$ by $$g \prec h \iff g\inv h \in P.$$ The induced positive cone by this left order is exactly $P$. Thus, the notions of left-order and positive cone are equivalent. 

It is straightforward to show that positive cones are closed under taking subgroups: if $H$ is a subgroup of $G$, then $H$ is also left-orderable with positive cone $P \intersect H$. 

\subsection{Regular languages}\label{reg-lang} A language over a finite set $X$ is a subset of the free monoid $X^* = \union_{n=0}^\infty X^n$, the set of arbitrarily long words with symbols in $X$. 

\begin{definition} A language is \emph{regular} if it is accepted by a finite state automaton.
\end{definition}

A \emph{finite state automaton} is a quintuple $\mathcal{A} = (S, X, \tau, A, s_0)$, where $S$ is a finite set called the \emph{state set}, $X$ is a finite alphabet for the input words, $\tau : S \times X \to S$ is a \emph{transition function} taking one state to another, $A \subseteq S$ is a set of states called the \emph{accept states} (or \emph{final states}), and $s_0 \in S$ is the \emph{initial state}. The function $\tau$ extends recursively to $\tau : S \times X^* \to S$ by setting $\tau(s, wx) = \tau(\tau(s,w),x)$ where $w \in X^*, x \in X$, and $s \in S$. The \emph{accepted language} by the automaton is the set of words $$\{w \in X^* : \tau(s_0, w) \in A\}.$$

\begin{remark}\label{fg-rmk}
Let $X$ be a finite alphabet, and set $P = \langle X \rangle^+$ to be a semigroup generated by $X$. We remark that $P$ is always accepted by an automaton $\mathcal{A}$ with two states. Indeed, set the states to be $S = \{s_0, s_1\}$ and the accept state to $A = \{s_1\}$, and define $\tau$  such that $$\tau(s_0, x) = s_1, \quad \tau(s_1, x) = s_1,\qquad \forall x \in X.$$ This automaton does not accept the empty word, since the inital state $s_0$ is not an accept state. However, it accepts any non-empty string in the semigroup, which is represented by always being in the accept state $s_1$ after one input letter. This shows that the property of being finitely generated as a semigroup is stronger than being a regular language.\end{remark}

Given two alphabets $X$ and $Y$ and a function $f : X \to Y^*$ there is a unique \emph{monoid homomorphism} $h: X^* \to Y^*$ extending $f$, namely, the map sending $w \in X^*$, $w = x_1 \dots x_n$ to $h(w) = f(x_1)\dots f(x_n)$, where each $f(x)$ for $x \in X$ is a word in $Y^*$. The image of a regular language $L \subseteq X^*$ under a monoid homomorphism is also regular. For a group $G = \langle X \rangle$, the \emph{evaluation map} $\pi: X^* \to G$ is the monoid homomorphism sending words in $X$ to the element they represent in $G$. Note that by abuse of notation, we do not distinguish between a generating set for $G$ and the set of symbols used to represent it. 

The following definition will be useful throughout our paper. 

\begin{definition}[Regular positive cone]
We say that a positive cone $P$ of a finitely generated group $G$ is a \emph{regular positive cone} if there exists a finite generating set $X$ and a regular language $L \subset X^*$ such that $\pi(L) = P$, where $\pi$ is the evaluation map.
\end{definition}

\begin{theorem}[Hermiller and \v{S}uni\'c \cite{HermillerSunic2017NoPC}]\label{herm-sunic}
Let $A, B$ be two non-trivial, finitely generated, left-orderable groups. Let $G = A * B$. Then $G$ does not admit a regular positive cone. 
\end{theorem}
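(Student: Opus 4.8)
The plan is to argue by contradiction, combining the pumping lemma with the free product normal form.

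\emph{Setup and reduction.} Suppose $G = A*B$ admitted a regular positive cone, say $P = \pi(L)$ for some left order $\prec$ on $G$, finite generating set $X$, and regular $L \subseteq X^*$. I would first note that regularity of a positive cone does not depend on the finite generating set: between the free monoids on any two finite generating sets there are monoid homomorphisms commuting with the evaluation maps, and the homomorphic image of a regular language is regular (Section \ref{reg-lang}); so I may assume $X = X_A \sqcup X_B$ with $X_A$, $X_B$ symmetric generating sets of $A$, $B$. Then every $g \in G \smallsetminus \{1\}$ has a unique reduced syllable form $g = g_1 \cdots g_m$, with the $g_i$ lying alternately in $A \smallsetminus \{1\}$ and $B \smallsetminus \{1\}$; write $\ell(g) = m$, and observe that $\ell(\pi(w)) \le |w|$ for every $w \in X^*$, since grouping a word into maximal same-factor blocks can only decrease the syllable count. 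Left-orderable groups are torsion-free, so I fix $a \in A$, $b \in B$ of infinite order; after replacing them by their inverses if necessary I may assume $a, b \in P$, whence $ab \in P$ and $(ab)^n \in P$ for all $n$, and by the normal form theorem the subgroup $\langle a,b\rangle$ of $G$ is free of rank two.

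\emph{Extracting periodicity by pumping.} For each $n$, choose $w_n \in L$ with $\pi(w_n) = (ab)^n$; since $\ell((ab)^n) = 2n$ we get $|w_n| \ge 2n \to \infty$. By the pumping lemma, with constant $N$, write $w_n = u_n v_n z_n$ with $v_n \ne \varepsilon$, $|u_n v_n| \le N$, and $u_n v_n^k z_n \in L$ for all $k \ge 0$. Only finitely many elements of $G$ are named by words of length $\le N$, so along an infinite set $S$ of indices $n$ the elements $p := \pi(u_n)$ and $q := \pi(v_n)$ are constant. Applying $\pi$ and comparing the case $k = 1$ (which returns $w_n$, so $\pi(z_n) = q^{-1}p^{-1}(ab)^n$), I obtain, for this $p$, this $q$, every $k \ge 0$, and every $n \in S$,
\[
\bigl(p\, q^{\,k-1}\, p^{-1}\bigr)(ab)^n \in P .
\]
Thus $P$ contains a two-parameter family that is periodic in the conjugating direction $q$; symmetrically, applying the same argument to $L^{-1}$ and $(ab)^{-n} \in P^{-1}$ produces a companion periodic family inside $P^{-1}$, along with the loop relations the automaton for $L$ imposes at bounded depth along the syllables of $(ab)^n$.

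\emph{The contradiction, and the main obstacle.} It remains to show that such periodic families cannot coexist inside a positive cone of a free product — this is where the real work lies, and where the hypothesis that $G$ is a free product is essential. A regular language forces eventual periodicity (finitely many states), whereas a left order on $A*B$ should be genuinely aperiodic along the free directions supplied by the conjugates $\{p\, q^{k}\, p^{-1}\}_k$: using the trichotomy $G = P \sqcup P^{-1} \sqcup \{1\}$ and $PP \subseteq P$ together with the free product normal form, one aims to exhibit, at unbounded syllable depth, reduced words in $P$ and reduced words in $P^{-1}$ arranged in a pattern that no eventually periodic family can separate, contradicting $\pi(L) = P$. The hard part is to extract this aperiodicity statement in a usable form and match it to the family the pumping lemma produces; it must genuinely use that $G$ is a free product, since the analogous statement is false for $\ints^2$ with a lexicographic order, which does admit a regular positive cone. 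Granting such a lemma, the displayed family contradicts $\pi(L) = P$, and hence $G = A*B$ has no regular positive cone.
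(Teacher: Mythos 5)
There is a genuine gap, and you acknowledge it yourself: the entire argument hinges on a lemma you never prove. Everything up to the display is routine bookkeeping (change of generating set, pigeonhole on the pumping prefix and loop), and the conclusion you actually extract --- that $p\,q^{k-1}p^{-1}(ab)^n \in P$ for all $k\ge 0$ and infinitely many $n$ --- is not by itself in tension with the axioms of a positive cone. First, nothing prevents the pumped piece from evaluating to the identity: $v_n \ne \varepsilon$ as a word does not give $q = \pi(v_n) \ne 1$ in $G$, and if $q = 1$ the ``two-parameter family'' collapses to the single elements $(ab)^n$ and carries no information. Second, even when $q \ne 1$, a positive cone of a free product can easily contain a family of the form $\{p\,q^{m}p^{-1}g : m \ge -1\}$ for a fixed positive $g$; trichotomy and $PP \subseteq P$ alone do not forbid it, and you give no mechanism that does. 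The step where the free-product structure must enter --- producing, at unbounded syllable depth, a pattern of elements of $P$ and $P^{-1}$ that no finite-state device can separate --- is precisely the substance of Hermiller and \v{S}uni\'c's theorem, and your sketch defers it entirely (``Granting such a lemma\dots''). As written, the proposal is a plausible opening move plus an IOU for the theorem itself.

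For calibration: the paper you are being compared against does not prove Theorem \ref{herm-sunic} at all; it is imported verbatim from \cite{HermillerSunic2017NoPC} and used as a black box (it is the contradiction target in Section \ref{acyl} and the starting point in the introduction). So there is no internal proof to measure your route against, but the published argument of Hermiller and \v{S}uni\'c is structurally different from a bare pumping-lemma attack: it exploits the automaton's finitely many states along carefully chosen families of normal-form elements of $A*B$ and plays the resulting constraints against left-invariance of the order. If you want to complete your approach, the missing aperiodicity lemma is where all the difficulty lives, and your own observation that it must fail for $\mathbb{Z}^2$ (which has a regular, indeed finitely generated, positive cone) is a good sanity check that no soft argument at this point will suffice.
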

In particular, this theorem states that $F_2$ cannot admit a regular positive cone. However, our Corollary \ref{main-thm2} states that there is a positive cone for $F_2 \times \ints$ which is finitely generated as a semigroup. In other words, taking the Cartesian product of $F_2$ with $\ints$ allows the resulting group to have a finitely generated positive cone, an even stronger property than having a regular positive cone.  

Theorem \ref{main-thm} generalizes Theorem \ref{herm-sunic} up to quasi-geodesic positive cones (see Definition \ref{def-qgpc}) in a class of groups containing free products, called acylindrically hyperbolic (see Section \ref{acyl-hyp-grp}). Both proofs will depend on the language-convexity criterion (see Proposition \ref{convex-crit}).

\section{Pairs of Fellow-Travelling Words Form A Regular Language} \label{fellow-travel}
We will prove that the set of pairs of words which represent the same element and asynchronously $M$-fellow-travel for some $M \geq 0$ is a regular language. This construction will play a key part in the proof of Proposition \ref{convex-crit}. In this section, fix $X$ to be a finite alphabet closed under formal inversion, and fix $\$$ to be a padding symbol. Define $X^\$ := X \union \{\$\}$. Let $G$ be a group generated by $X$, and let $\Gamma$ be the associated Cayley graph with vertex set $V(\Gamma)$ and word metric $d$. Let $\pi: (X^\$)^* \to G$ be the evaluation map which maps $x \in X$ to itself and $\$$ to the identity. As a shorthand, we will often denote $\pi(w)$ by $\bar w$.


\begin{definition}[Synchronous and asynchronous fellow-travel]
Let $(u,v) \in \Y$. We can identify $\Y$ with the subset of $(X^\$)^* \times (X^\$)^*$ consisting of pairs of words $(u,v)$ where $u \in \X^*, v \in \X^*$ and $u$ and $v$ have the same length. Let $u = x_1 \dots x_n$ and $v = y_1 \dots y_n$, and $u_i := x_1 \dots x_i$, $v_i := y_1 \dots y_i$ be the prefixes of $u$ and $v$, respectively. Let $M$ be a fixed non-negative constant. We say that $u$ and $v$ \emph{synchronously} $M$-fellow-travel if $d(\bar u_i, \bar v_i) \leq M$ for $i = 0,\dots,n$. 
We say that $u$ and $v$ \emph{asynchronously} $M$-fellow-travel if there exists a synchronously $M$-fellow-travelling pair $(u', v') \in \Y$ such that $u'$ and $v'$ are obtained by inserting padding symbols $\$$ between the symbols of $u$ and $v$. We will refer to $u'$ and $v'$ as the \emph{padded version} of $u$ and $v$, respectively.
\end{definition}

\begin{proposition}\label{L-aft}
Let $M \geq 0$. The language of pairs of words $(u,v) \in (\X \times \X)^*$ such that $u$ and $v$ synchronously $M$-fellow-travel and represent the same element in $G$, $$\L_M := \{(u,v) \in \Y : \bar u = \bar v \text{ and } d(\bar u_i, \bar v_i) \leq M, i = 1,\dots,n \}$$ is a regular language. 
\end{proposition}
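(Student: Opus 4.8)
The plan is to build an explicit finite state automaton whose states record the ``displacement'' $\bar u_i\inv \bar v_i$ between the two partial words, together with a flag for the padding discipline. First I would observe that the synchronous $M$-fellow-travel condition $d(\bar u_i, \bar v_i)\le M$ is equivalent to requiring $\bar u_i\inv \bar v_i$ to lie in the ball $B = \{g \in G : |g|_{G,X} \le M\}$, which is a \emph{finite} set since $X$ is finite. So the natural state set is $S = B \cup \{\text{fail}\}$, with initial state the identity (corresponding to $u_0 = v_0 = \varepsilon$, displacement $1$), and accept state the identity again (encoding $\bar u = \bar v$, i.e. displacement $1$ after reading the whole pair). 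On input letter $(x,y) \in \X \times \X$, from state $g \in B$ one transitions to $\bar x\inv g \bar y$ if this element still lies in $B$, and to the absorbing fail state otherwise; the fail state is non-accepting and absorbs.

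Next I would check that this automaton accepts exactly $\L_M$. Reading a prefix $(u_i, v_i)$ of $(u,v)$ and tracking the state, an easy induction on $i$ shows the state after step $i$ is $\bar u_i\inv \bar v_i$ provided every intermediate displacement stayed in $B$, and is fail otherwise. Thus a pair $(u,v)$ reaches the accept state exactly when all intermediate displacements lie in $B$ — which is precisely $d(\bar u_i, \bar v_i) \le M$ for all $i$ — and the final displacement is $1$, i.e. $\bar u = \bar v$. Note that the condition $d(\bar u_0, \bar v_0) = 0 \le M$ holds automatically, and the case $i=0$ is handled by the initial state being the identity, so there is no discrepancy with the index range in the statement. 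This is the heart of the argument; everything else is bookkeeping. One subtlety worth a sentence: the alphabet of the automaton is the finite set $\X \times \X$ (or the relevant subset thereof used to encode $\Y$), and one should make sure the identification of $\Y$ with equal-length pairs of words over $\X$ is compatible with reading the input synchronously, letter-pair by letter-pair — this is immediate from the definition of $\Y$ as $(\X \times \X)^*$.

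I do not expect a genuine obstacle here: the only thing that could go wrong is the state set being infinite, and that is ruled out because $B$ is finite (finitely generated group, hence locally finite balls — in fact finiteness of $X$ alone suffices since $B$ is a subset of words of length $\le M$ evaluated in $G$). If one wants to avoid even mentioning $G$, one can equally take the state set to be $B_M^X(1)$ as a set of group elements, or, if paranoid about decidability of equality in $G$, note that regularity of a language is a purely combinatorial property and the transition function just needs to be \emph{a} function on a finite set, which it is. So the main ``work'' is simply writing down the automaton cleanly and verifying the prefix-tracking invariant by induction; I would present the automaton, state the invariant as a short lemma or inline claim, prove it in two lines, and conclude.
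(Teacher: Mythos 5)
Your construction is exactly the paper's proof: the automaton has state set $B_M \cup \{\rho\}$ tracking the displacement $\bar{u}_i^{-1}\bar{v}_i$, transitions $g \mapsto \bar{x}^{-1}g\bar{y}$ with an absorbing fail state, the identity as both initial and sole accept state, and correctness verified by induction on prefixes. The ``flag for the padding discipline'' you mention at the outset is unnecessary (padding symbols evaluate to the identity, and $\L_M$ imposes no constraint on where they occur), and indeed you never use it, so the argument stands as is.
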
 \label{asft-thm}

We remark that if $(u,v)$ is a pair of words over the unpadded alphabet $(X \times X)^*$, and $(u,v)$ asynchronously $M$-fellow-travel and represent the same element in $G$, then there are padded versions $u'$ and $v'$ of $u$ and $v$, respectively, such that $(u',v') \in \L_M$. Roughly speaking, $u'$ and $v'$ are the versions of $u$ and $v$ where one word ``waits'' for the other after each symbol by virtue of the placement of $\$$ between two symbols of $X$. In that sense, $\L_M$ captures the language of asynchronously $M$-fellow-travelling words in $(X \times X)^*$.
\begin{proof}[Sketch of the proof of Proposition \ref{L-aft}]
Define the finite-state automaton $\mathcal{A}$ as follows. The automaton $\mathcal{A}$ is the quintuple $(S, X^\$ \times X^\$, \tau, A, s_0)$, where $B_M \subseteq V(\Gamma)$ is the set of group elements contained in a ball of radius $M$ around the identity, and $S := B_M \union \{\rho\}$, where $\rho$ denotes a fail state. Let $g \in B_M$, and define the transition function $\tau: S \times (\X \times \X)^* \to S$, as

\begin{align*}
& \tau(g, (x,y)) = \begin{cases} \bar{x}\inv g \bar{y} & \bar{x}\inv g \bar{y} \in B_M\\ 
\rho &  \bar{x}\inv g \bar{y} \notin B_M \end{cases},  \qquad g \in B_M\\
& \tau(\rho, (x,y)) = \rho \qquad\qquad\qquad\qquad\qquad\qquad \forall (x,y) \in \Y.
\end{align*}

Let the accepting set of states be $A = \{1\}$, and the initial state be $s_0 = 1$. It is now straightforward to check by induction that this automaton accepts exactly the language $\L_M$.
\end{proof}

A proof of the following well-known theorem can be found in \cite[Section 1.4]{Epstein1992} and involves constructing the appropriate automata.
\begin{theorem}[Predicate calculus]\label{prop-cal}
Given regular languages $L_1$ and  $L_2$ over the same finite alphabet $Y$, the following languages are also regular.

\begin{itemize}
\item $L_1 \times L_2$ where  $L_1 \times L_2 = \{(u,v)\in (Y \times Y)^* : u\in L_1, v\in L_2\}$.
\item $L_1 \intersect L_2$. 
\end{itemize}
Moreover, if $L_3$ is a regular language over a product of finite alphabets $Y_1 \times \dots \times Y_n$ and $\text{Proj}_i : (Y_1 \times \dots \times Y_n)^* \to Y_i^*$ is the projection map on the $i$th coordinate,  then $\text{Proj}_i(L_3)$ is a regular language.

\end{theorem}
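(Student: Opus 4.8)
The statement to prove is Theorem \ref{prop-cal}, the "predicate calculus" closure properties: given regular languages $L_1, L_2$ over a common alphabet $Y$, the product $L_1 \times L_2$ and intersection $L_1 \cap L_2$ are regular, and projections of regular languages over product alphabets are regular.

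\textbf{Overall approach.} The plan is to exhibit, for each claimed language, an explicit finite-state automaton, built functorially from automata $\mathcal{A}_1 = (S_1, \cdot, \tau_1, A_1, s_1)$ and $\mathcal{A}_2 = (S_2, \cdot, \tau_2, A_2, s_2)$ recognizing $L_1$ and $L_2$. The only genuinely new subtlety compared to the classical statement over a single alphabet is that here words are over \emph{product} alphabets, so one must be careful that the alphabet over which the resulting automaton reads is the intended one (e.g. $Y \times Y$ for $L_1 \times L_2$), and that padding issues do not arise because all words in a given language have, by the definition of $(Y\times Y)^*$, matching lengths in the two coordinates.

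\textbf{Key steps, in order.} First, for the product $L_1 \times L_2$: define $\mathcal{A} = (S_1 \times S_2,\ Y \times Y,\ \tau,\ A_1 \times A_2,\ (s_1, s_2))$ with $\tau\bigl((p,q),(x,y)\bigr) = \bigl(\tau_1(p,x), \tau_2(q,y)\bigr)$. A one-line induction on word length shows $\tau\bigl((s_1,s_2),(u,v)\bigr) = \bigl(\tau_1(s_1,u), \tau_2(s_2,v)\bigr)$, so $(u,v)$ is accepted iff $u \in L_1$ and $v \in L_2$; hence $L_1\times L_2$ is regular. Second, for intersection: now $L_1, L_2 \subseteq Y^*$, so take the same product construction but over the alphabet $Y$ (not $Y\times Y$), reading each letter $x \in Y$ simultaneously into both coordinates: $\tau\bigl((p,q),x\bigr) = \bigl(\tau_1(p,x),\tau_2(q,x)\bigr)$, with accept set $A_1 \times A_2$ and start $(s_1,s_2)$. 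The same induction gives acceptance iff $w \in L_1$ and $w \in L_2$, so $L_1 \cap L_2$ is regular. Third, for projections: given $L_3 \subseteq (Y_1 \times \dots \times Y_n)^*$ recognized by $\mathcal{A}_3 = (S_3, \cdot, \tau_3, A_3, s_3)$, one must produce an automaton over $Y_i^*$ recognizing $\text{Proj}_i(L_3)$. The natural move is a \emph{nondeterministic} automaton whose transitions on input $x \in Y_i$ guess the remaining coordinates: from state $p$ on reading $x$ go to any state in $\{\tau_3(p, (z_1,\dots,z_n)) : z_i = x\}$. A run of this NFA accepting $w \in Y_i^*$ corresponds exactly to a choice of a word $\tilde w \in (Y_1\times\dots\times Y_n)^*$ with $i$th coordinate $w$ and $\tilde w \in L_3$, i.e. to $w \in \text{Proj}_i(L_3)$. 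Then invoke the standard subset construction to convert this NFA to a DFA, concluding that $\text{Proj}_i(L_3)$ is regular.

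\textbf{Main obstacle.} None of the individual verifications is hard; the mild friction is bookkeeping. The product and intersection cases are essentially identical constructions differing only in how a single input symbol is routed to the two factor transition functions, and the inductions are routine. The projection case is the one place where determinism is genuinely lost and one must either appeal to (or, if one wants the excerpt to be self-contained, briefly recall) the equivalence of NFAs and DFAs via the subset construction; this is the step I would flag as the ``substantive'' one, though it is entirely classical. Since the excerpt explicitly defers the details to \cite[Section 1.4]{Epstein1992}, the appropriate level here is to give the constructions and the statement that the straightforward inductions and the subset construction finish the argument, rather than to grind through them.
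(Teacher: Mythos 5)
Your constructions (product automaton for $L_1\times L_2$ and $L_1\cap L_2$, and an NFA plus subset construction for $\text{Proj}_i$) are correct and are exactly the standard automata-theoretic argument that the paper itself defers to by citing \cite[Section 1.4]{Epstein1992} with the remark that the proof ``involves constructing the appropriate automata.'' No discrepancy to report.
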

 We repeatedly use predicate calculus to prove the next lemma. 
  
 \begin{definition}[Padded language]\label{padded-lang}
Let $L$ be the regular language accepted by the finite state automaton $\mathcal{A} = (S, X, \tau, A, s_0)$. Fix $\$$ as a padding symbol. The \emph{padded language} $L^\$$ of $L$ is the language accepted by the automaton $\mathcal{A^\$} = (S, X, \tau^\$, A, s_0)$, where $\tau^\$$ is the function from $S \times (X \union \{\$\})$ to $S$ defined as
$$\tau^\$(s,x) := \begin{cases} \tau(s,x) & s \in S, x \in X \\ 
s & s \in S, x = \$. \end{cases}$$
The language $L^\$$ consists of all padded versions of the words in $L$, and is regular by construction.
\end{definition}

\begin{lemma}\label{L-tilde} Let $L \subseteq X^*$ be a regular language, let $M\geq 0$, and let $\mathcal{L}_M$ be the language of synchronously $M$-fellow-travelling pairs of words in $\Y$ such that in each pair $(u,v)$, the words $u$ and $v$ represent the same element in $G$, as defined in Proposition \ref{L-aft}. Then, $$\tilde L := \{v \in \Z 
: \exists u \in L^\$ \text{ such that } (u,v) \in \mathcal{L}_M\},$$ is a regular language, and $\pi(\tilde L) = \pi(L)$.
\end{lemma}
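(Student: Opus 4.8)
The plan is to build $\tilde L$ by a short sequence of predicate-calculus operations (Theorem \ref{prop-cal}) applied to the two regular languages we already have in hand: the padded language $L^\$$ from Definition \ref{padded-lang}, and the fellow-travelling language $\mathcal{L}_M$ from Proposition \ref{L-aft}. Concretely, I would first form the product $L^\$ \times \Z$ over the alphabet $\X \times \X$; since $\Z = (\X)^*$ is accepted by a one-state automaton it is regular, so the product is regular. Intersecting with $\mathcal{L}_M$ gives the regular language $\{(u,v) : u \in L^\$,\ v \in \Z,\ (u,v) \in \mathcal{L}_M\}$, which by definition of $\mathcal{L}_M$ already forces $\bar u = \bar v$ and the fellow-travelling condition. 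Finally, applying the projection map $\mathrm{Proj}_2$ onto the second coordinate yields exactly $\tilde L$, and Theorem \ref{prop-cal} guarantees it is regular. So the regularity half is essentially a bookkeeping exercise in which regular-language closure properties to invoke and in what order.

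The content then lies in the equality $\pi(\tilde L) = \pi(L)$, which I would prove by two inclusions. For $\pi(\tilde L) \subseteq \pi(L)$: if $v \in \tilde L$, there is $u \in L^\$$ with $(u,v) \in \mathcal{L}_M$, hence $\bar v = \bar u$; and since $u$ is a padded version of some word $w \in L$ and the evaluation map sends $\$$ to the identity, $\bar u = \pi(w) \in \pi(L)$, so $\bar v \in \pi(L)$. For $\pi(L) \subseteq \pi(\tilde L)$: given $g \in \pi(L)$, pick $w \in L$ with $\pi(w) = g$. The obvious candidate is to take $v$ to be (a padded version of) $w$ itself and $u$ a padded version of $w$: then $u$ and $v$ are literally the same sequence of group elements, so $d(\bar u_i, \bar v_i) = 0 \le M$ at every prefix and $\bar u = \bar v = g$, giving $(u,v) \in \mathcal{L}_M$ with $u \in L^\$$; hence $v \in \tilde L$ and $g = \bar v \in \pi(\tilde L)$. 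One small care point: $\mathcal{L}_M$ as defined in Proposition \ref{L-aft} lives in $\Y$, i.e. pairs of words of equal length over the padded alphabet, so I must make sure the chosen $u$ and $v$ have equal length — but since I am free to choose $u$ and $v$ to be the identical padded word, or simply $w$ with no padding at all, equal length is immediate.

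I do not anticipate a genuine obstacle here; the lemma is a routine but necessary assembly step. The only thing to be careful about is the precise definitions: that $L^\$$ consists of \emph{all} padded versions of words of $L$ (so that the "waiting" phenomenon described after Proposition \ref{L-aft} is available), that $\Z$ is itself regular, and that membership in $\mathcal{L}_M$ is stated with the equal-evaluation condition $\bar u = \bar v$ built in, so that no separate argument for it is needed. The mild subtlety worth a sentence in the writeup is why we need $L^\$$ rather than $L$: a word $v$ that asynchronously fellow-travels a word of $L$ may have to pause while the $L$-word catches up, and those pauses are exactly the $\$$ symbols that $L^\$$ supplies; using $L$ itself would only capture synchronous fellow-travelling and would in general give a strictly smaller language whose image might not be all of $\pi(L)$.
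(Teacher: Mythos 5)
Your proposal is correct and follows essentially the same route as the paper: regularity via the product $L^\$ \times (\X)^*$, intersection with $\mathcal{L}_M$, and projection onto the second coordinate, and the equality $\pi(\tilde L) = \pi(L)$ via the diagonal observation that $(u,u) \in \mathcal{L}_M$ for any $u$ together with $\pi(L^\$) = \pi(L)$. No gaps.
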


Note that $\tilde L$ is a language of words which synchronously $M$-fellow-travel with words in the padded language $L^\$$ and represent the same elements of $G$ as the words in $L$.

\begin{proof}[Proof of Lemma \ref{L-tilde}] We will be using Theorem \ref{prop-cal} several times. Let $$L' := \{(u,v) \in \Y : u \in \padL \text{ and } v \in \Z\}.$$ Observe that $L' = L^\$ \times (\X)^*$ so it is regular. Set $$L'' :=  \{(u,v) \in \Y : u \in \padL \text{ and } (u,v) 
\in \mathcal{L}_M\}.$$ Since $L'' = \L_M \intersect L'$, so $L''$ is also regular. Set $$
\tilde L := \{v \in \Z 
: \exists u \in \padL \text{ such that } (u,v) \in \mathcal{L}_M\} $$
and observe that $\tilde L = \text{Proj}_2(L'')$, so it is regular. Finally, we observe that 
\begin{align*}
\pi(L) 
&= \pi(\text{Proj}_1(L')), & \qquad \pi(L) = \pi(L^\$) \\
&= \pi(\text{Proj}_1(L'')), & \qquad (u,v) \in L' \imp (u,u) \in L'' \\
&= \pi(\text{Proj}_2(L'')), & \qquad (u,v) \in L'' \imp \pi(u) = \pi(v)\\
&= \pi(\tilde L). \end{align*} \end{proof}

\section{Language Convex Subgroups}\label{convex}
This section is dedicated to the definition of language-convexity and the proof of Theorem \ref{thm-fi}. For this section, let $X$ be a finite alphabet which is closed under formal inversion and let $G = \langle X \rangle$ be a group generated by $X$. Let $\Gamma$ be the associated Cayley graph with the graph metric. For $w$ in $X^*$,  $w = x_1 \dots x_n$, and for $i = 1,\dots,n$, we will denote by $w_i$ the prefix of length $i$ of $w$, i.e. $w_i := x_1 \dots x_i$. Let $|w|$ denote the length of $w$.  Set $\pi : X^* \to G$ as the evaluation map. For convenience, we will denote $\pi(w)$ by $\bar w$. 

\subsection{Word-induced paths}\label{word-ind-path} Every word in $(\X)^*$ induces a path in the Cayley graph $\Gamma$. Indeed, assume that we have a word $w \in X^*$. Let $w = x_1 \dots x_n$ for $x_i \in X$ and $i = 1,\dots,n$.  The induced path $p_w = (1 = \bar w_0, \dots, \bar w_{n-1}, \bar w_n)$ is a sequence composed of vertices $\bar w_i$ which are the elements of $G$.  The path $p_w$ can be parametrized in the natural way as a continuous function $p_w: [0,n] \to \Gamma$ satisfying $p_w(i) = \bar u_i$ for $0 \leq i \leq n$. On the other hand, if $w' \in (\X)^*$, then take $w'$ as inducing the same sequence of vertices as its unpadded version. We view the path $p_{w'}$ as unparametrized.

\begin{definition}[language-convexity]\label{lconvex}
Let $L$ be a language over $X$. A subset $H \subseteq G$ is \emph{language-convex with respect to $L$} if there exists an $R \geq 0$ such that for each $w \in L$ with $\bar w \in H$, the induced path $p_w$ lies within distance $R$ of $H$ in $\Gamma$.
\end{definition}

\begin{proposition}[language-convexity criterion]\label{convex-crit} Let $X$ be a finite set which is closed under formal inversion, $X = X^{-1}$. Set $G = \langle X \rangle$. Let $L$ be a regular language, and let $P = \pi(L)$ where $\pi$ is the evaluation map onto $G$. Let $H$ be a subgroup of $G$. If $H$ is language-convex with respect to $L$, then there exists a regular language $L_H$ such that  $\pi(L_H) = H \intersect P$. 
\end{proposition}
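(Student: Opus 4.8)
The plan is to use language-convexity to ``approximate'' elements of $H \intersect P$ by paths that stay near $H$, then translate those paths into $H$ and read off a word over a generating set of $H$. Since $H$ is language-convex with respect to $L$ with constant $R$, for each $w \in L$ with $\bar w \in H$ the induced path $p_w$ stays within distance $R$ of $H$; hence for each prefix $w_i$ we can pick $h_i \in H$ with $d(\bar w_i, h_i) \leq R$, taking $h_0 = 1$ and $h_n = \bar w$. Then $h_i\inv h_{i+1}$ lies in the ball $B_{2R+1}^X(1)$ intersected with $H$, which is a \emph{finite} set; let $Y$ be this finite set, viewed as a finite alphabet equipped with the generating map into $H \leq G$. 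The word $y_1 \cdots y_n$ with $y_{i} = h_{i-1}\inv h_{i}$ evaluates to $\bar w \in H \intersect P$, and conversely every element of $H \intersect P$ arises this way. So as a \emph{set}, $\pi_Y(\{\text{such }Y\text{-words}\}) = H \intersect P$; the work is to see the relevant set of $Y$-words is regular.

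To make this precise with the machinery of Section \ref{fellow-travel}, I would first replace $Y$ by a realization inside $X^*$: fix for each $y \in Y$ a fixed word $\sigma(y) \in X^*$ of bounded length representing it, and let $\hat L$ be the set of words $\sigma(y_1)\cdots\sigma(y_n)$ arising from admissible $Y$-sequences as above. The key observation is that any such $\hat L$-word asynchronously $(2R+1)$-fellow-travels the word $w \in L$ it was built from, and represents the same element of $G$ (since $h_n = \bar w$). So let me work over the padded alphabet: by Lemma \ref{L-tilde} applied with $L$ and $M = 2R+1$, the language $\tilde L = \{v \in \Z : \exists u \in L^\$,\ (u,v) \in \mathcal{L}_M\}$ is regular and satisfies $\pi(\tilde L) = \pi(L) = P$. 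Now I intersect $\tilde L$ with the (regular) language of words over $\X$ that are ``$Y$-shaped and stay in $H$''. Concretely, consider words $v \in \Z$ whose non-padding part factors into consecutive blocks, each block a word over $X$ of length at most the bound on $|\sigma(y)|$, such that every \emph{partial product up to a block boundary} evaluates into $H$. The set of such $v$ is accepted by a finite-state automaton whose states track (a) the position within the current block and the word read so far in it, using finitely many states since block length is bounded, and (b) a residue recording the $H$-coset; but since we only need membership in $H$ at block boundaries and the partial products all lie in $B_R^X(1)\cdot H$ — wait, more carefully: at block boundaries the partial product is exactly some $h_i \in H$, and between boundaries it is $h_i$ times a prefix of $\sigma(y_{i+1})$, so it lies in $B_R^X(1) H$, a finite union of cosets $gH$ with $g \in B_R^X(1)$. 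The automaton tracks which coset $gH$ the current partial product lies in (finitely many), reset to $H$ itself at each block boundary, and enters a fail state if it ever leaves $B_R^X(1)H$ or if a block boundary does not land in $H$. This is a legitimate finite automaton, so call the accepted language $L_{\mathrm{shape}}$, which is regular.

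Set $L' := \tilde L \intersect L_{\mathrm{shape}}$, regular by Theorem \ref{prop-cal}. I claim $\pi(L') = H \intersect P$. For $\subseteq$: every $v \in L'$ has $\bar v \in \pi(\tilde L) = P$ and, being $Y$-shaped with all block-boundary partial products in $H$, has $\bar v \in H$ (its full length is a multiple of block length ending at a boundary — I should arrange $L_{\mathrm{shape}}$ to only accept at boundaries, i.e. the accept states are exactly the boundary-and-in-$H$ states). For $\supseteq$: given $g \in H \intersect P$, pick $w \in L$ with $\bar w = g$; language-convexity gives the $h_i$'s, and after padding appropriately the word $v = \sigma(y_1)\cdots\sigma(y_n)$ (suitably padded to $M$-fellow-travel $w$) lies in $\tilde L$ by construction of $\tilde L$, and lies in $L_{\mathrm{shape}}$ by choice of the $h_i$. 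Hence $g \in \pi(L')$. Finally, to get a language over an abstract generating set $X$ for $G$ as required (rather than $\X$), note $\pi$ here already maps into $G$ via $\X \to G$ sending $\$ \mapsto 1$; composing with the obvious regularity-preserving homomorphism erasing $\$$ (the image of a regular language under a monoid homomorphism is regular, as recalled in Section \ref{reg-lang}) yields the desired $L_H \subseteq X^*$ with $\pi(L_H) = H \intersect P$. Take $L_H$ to be this image.

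The main obstacle I anticipate is the bookkeeping in $L_{\mathrm{shape}}$: one must be careful that ``staying within distance $R$ of $H$'' for the \emph{geodesic path} $p_w$ translates correctly into a bounded-coset invariant that a finite automaton can maintain along the substitute word $v$, and that the block structure (choice of $\sigma$ and the $h_i$) can always be realized compatibly with a single $M$-fellow-travel padding. Everything else is an assembly of Lemma \ref{L-tilde} and Theorem \ref{prop-cal}.
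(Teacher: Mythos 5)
Your proposal follows the same skeleton as the paper's proof: use convexity to choose $h_i \in H$ within $R$ of each prefix evaluation, note that the increments $y_i = h_{i-1}^{-1}h_i$ lie in the finite set $Y = \{h \in H : |h| \leq 2R+1\}$, replace the $L$-word by a concatenation of fixed representatives $\sigma(y_i)$, check that this word fellow-travels the original one and represents the same element, and then assemble regularity from Lemma \ref{L-tilde} and Theorem \ref{prop-cal}. However, two steps do not work as you stated them. The more serious one is the automaton for $L_{\mathrm{shape}}$: you propose to track ``which coset $gH$ the current partial product lies in,'' but left cosets do not give well-defined transitions under reading a letter on the right (knowing $zH$ and the letter $x$ does not determine $(zx)H$, since $zx \in zHx$, which is not a left coset), and the containment you assert is also wrong in general: a partial product inside a block has the form $h_i p$ with $h_i \in H$ and $p$ a prefix of the block, so it lies in $H\cdot B^X_{2R+1}(1)$, a bounded union of \emph{right} cosets, not in $B_R^X(1)\cdot H$. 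The repair is to track the right coset $Hz$, which updates deterministically by $Hz \mapsto Hzx$, with state set $\{Hg : g \in B^X_{2R+1}(1)\}$ plus a fail state, and with ``block boundary lands in $H$'' read off as $Hz = H$. Better still, the whole gadget is unnecessary: the language $\{\sigma(y_1)\cdots\sigma(y_k) : y_i \in Y\}$ is the image of the regular language $Y^*$ under a monoid homomorphism, hence regular, and every word in it evaluates into $H$; this is exactly the paper's choice $L_H = \vphi(Y^*) \cap \tilde L$ with $\vphi(y)$ a geodesic representative of $y$.

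The second issue is your ``key observation'' that the substituted word asynchronously $(2R+1)$-fellow-travels $w$, which you assert without argument. It is not automatic: with the obvious padding (let $w$ wait at $\bar w_i$ while the substituted word traverses the $i$-th block) a vertex near the start of a block of length $2R+1$ can be at distance roughly $3R$ from $\bar w_i$, which is why the paper takes $M = 3R+1$. Your constant can be rescued by letting $w$ wait at $\bar w_{i-1}$ during the first half of each block and at $\bar w_i$ during the second half, but you must either give that padding argument or simply enlarge $M$ to $3R+1$; nothing else in your proof is sensitive to the value of $M$. You should also note the degenerate blocks with $y_i = 1$ (empty $\sigma(y_i)$), where the substituted word must itself receive padding symbols to stay synchronized; your decision to work over the padded alphabet throughout and erase the padding at the end by a regularity-preserving homomorphism handles this cleanly, and is a minor point where your set-up is actually tidier than the paper's unpadded intersection $\vphi(Y^*) \cap \tilde L$.
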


This definition will be useful for the proof of the above proposition. 

\begin{definition}[Geodesic words]\label{geodesic-words}
A \emph{geodesic path} is a path $p$ connecting two vertices $v_1, v_2 \in V(\Gamma)$ such that $p$ has the shortest length amongst all paths from $v_1$ to $v_2$ in $\Gamma$. A word $w \in (\X)^*$ is \emph{geodesic} if $w \in X^*$ and the induced path $p_w$ is geodesic. If $g$ is a group element such that $\bar w = g$, then $w$ is a \emph{geodesic representative} of $g$. 
\end{definition}
{}
\begin{proof}[Proof of Proposition \ref{convex-crit}]

Let $L$ and $P$ be as in the statement of Proposition \ref{convex-crit}. Suppose that $H$ is language-convex with respect to $L$. We want to show that $H \intersect P$ can be represented by a regular language.

Let $R$ be, as in Definition \ref{lconvex}, the convexity parameter for $H$. Set $$Y = \{y \in H : |y| \leq 2R + 1\}.$$ Let $\vphi: Y^* \to X^*$ be the monoid homorphism sending each element $y \in Y$ to a geodesic representative in terms of the finite generating set $X$. Fix $M = 3R + 1$. Let $\tilde L$ be the regular language given by Lemma \ref{L-tilde}, which consists of the set of padded words in $X^{\$}$ which synchronously $M$-fellow-travel with words in the padded language $L^{\$}$. Recall that $\pi(\tilde L) = \pi(L)$. Set $$L_H = \vphi(Y^*) \intersect \tilde L.$$ The regularity of $L_H$ is given by Theorem \ref{prop-cal}. We will argue that $L_H$ is a language representing $H \intersect P$. 

We start by showing that $\pi(L_H) \supseteq H \intersect P$. Let $g \in H \intersect P$. Set $u = x_1 \dots x_n$ to be a representative of $g$ in $L$. Since $\bar u \in H$, we have by language-convexity that for each prefix $u_i = x_1 \dots x_i$, the evaluation $\bar u_i$ is at distance at most $R$ from $H$. Therefore, there exists $h_i \in H$ satisfying $d(\bar u_i, h_i) \leq R$ in $\Gamma$. Since $\bar u_0 = 1$ and $\bar u_n \in H$, we are allowed to set $h_0 = 1$ and $h_n = \bar u_n$. For each $i \in \{1,\dots,n\}$, let $y_i = h_{i-1}\inv h_i$. Observe that $y_i \in Y$. Indeed, $$|y_i| = d(h_{i-1}, h_i) \leq d(h_{i-1}, \bar u_{i-1}) + d(\bar u_{i-1},\bar u_i) + d(\bar u_i, h_i) \leq 2R + 1.$$

Let $w = y_1 \dots y_n$ and $w_i = y_1 \dots y_i$. Let $v = \vphi(w)$. It is clear that $\bar v = h_n = g$, and that $v$ belongs to the monoid $\vphi(Y^*)$. 

To show that $v \in \tilde{L}$, we will to show that it asynchronously $M$-fellow-travels with $u$. First observe that each $\bar u_i$ is at distance at most $R$ from each $\overline{\vphi(w_i)}$ by construction of $v$. Recall that the geodesic subpath connecting $\overline {\vphi (w_{i-1})}$ to $\overline {\vphi (w_{i})}$ is labelled by $\vphi(y_i)$ for $i = 1,\dots\,n$, where $w_0$ is the empty word. Therefore, any vertex in such a subpath is at distance at most $2R + 1$ from $\overline {\vphi (w_{i})}$ and hence at most $M = 3R + 1$ from $\bar u_i$. Let $u'$ be the padded word for $u$ which has $|y_i| - 1$  padding symbols $\$$ added after each $x_i$ for $i = 1,\dots,n$. Then $(u',v)$ synchronously $M$-fellow-travel. This shows that $v \in \tilde L$, and thus $\pi(L_H) \supseteq H \intersect P$. 

To conclude the proof, we show that $\pi(L_H) \subseteq H \intersect P$. If $w \in L_H$, then $\bar w \in \pi(\vphi(Y^*))$. Since $\pi(\vphi(Y)) \subseteq H$, we have that $\bar w \in H$. Moreover, $\bar w \in \pi(\tilde L) = \pi(L) = P$, so we obtain that $\pi(L_H) \subseteq H \intersect P$.
\end{proof}

For a language $L$ over $X^*$, denote by $\mathcal{A}(L)$ an automaton accepting the language $L$ such that it has a minimal number of states amongst all automata accepting $L$. Denote by $|\mathcal{A}(L)|$ the number of states of $\mathcal{A}(L)$. We conclude this section by getting an estimate for $|\mathcal{A}(L_H)|$, where $L_H$ is as in Proposition \ref{convex-crit}.

Let $\gamma_H$ denote the growth function of the subgroup $H \leq G$ with respect to the generating set $X$, that is $$\gamma_H(n) = | \{g \in H : |g| \leq n \} |.$$ 

\begin{corollary}
Let $G, X$ and $L$ be as in Proposition \ref{convex-crit}. Let $H$ be language-convex with respect to $L$ with convexity parameter $R$. There is a regular language $L_H$ representing $H \cap \pi(L)$ such that $$|\mathcal{A}(L_H)| \leq (2R + 1) \cdot |\mathcal{A}(L)| \cdot \gamma_H(2R + 1) \cdot (\gamma_G(3R + 1) + 1) .$$

\end{corollary}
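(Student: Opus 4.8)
The plan is to trace through the construction of $L_H$ in the proof of Proposition \ref{convex-crit} and to bound the number of states of each intermediate automaton, invoking the standard constructions behind Theorem \ref{prop-cal} for the bookkeeping. Recall that $L_H = \vphi(Y^*) \cap \tilde L$, where $Y = \{y \in H : |y| \le 2R+1\}$, $M = 3R+1$, and $\tilde L$ is produced by Lemma \ref{L-tilde}. So I would build the estimate from the inside out: first $\mathcal{L}_M$, then the padded and product languages, then the projection giving $\tilde L$, then $\vphi(Y^*)$, and finally the intersection.

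First I would bound $|\mathcal{A}(\mathcal{L}_M)|$. By the sketch of Proposition \ref{L-aft}, $\mathcal{L}_M$ is accepted by the automaton on state set $B_M \cup \{\rho\}$, so it has $\gamma_G(M)+1 = \gamma_G(3R+1)+1$ states. Next, $L^\$$ has the same number of states as $\mathcal{A}(L)$ by Definition \ref{padded-lang}, and $L' = L^\$ \times (\X)^*$ can be taken with $|\mathcal{A}(L)| \cdot 1 = |\mathcal{A}(L)|$ states (the second factor needs only one state since $(\X)^*$ is everything). The intersection $L'' = \mathcal{L}_M \cap L'$ is then accepted by the product automaton with at most $(\gamma_G(3R+1)+1)\cdot |\mathcal{A}(L)|$ states, and projecting onto the second coordinate to get $\tilde L$ does not increase the state count (projection is a relabelling of transitions, possibly introducing nondeterminism, but keeping the same state set), so $|\mathcal{A}(\tilde L)| \le (\gamma_G(3R+1)+1)\cdot |\mathcal{A}(L)|$. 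For $\vphi(Y^*)$: the free monoid $Y^*$ has $|Y| = \gamma_H(2R+1)$ letters and is accepted by a one-state automaton, but $\vphi$ replaces each letter $y$ by a geodesic word of length at most $2R+1$, so the image under this monoid homomorphism is accepted by an automaton with at most $(2R+1)\cdot\gamma_H(2R+1)$ states (one state for each intermediate position within each of the $\gamma_H(2R+1)$ geodesic blocks). Taking the product with $\tilde L$ multiplies the two bounds, yielding $|\mathcal{A}(L_H)| \le (2R+1)\cdot\gamma_H(2R+1)\cdot(\gamma_G(3R+1)+1)\cdot|\mathcal{A}(L)|$, which is exactly the claimed bound after the (automatic) observation that minimization only decreases the state count.

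The one point that needs a little care — and which I expect to be the main obstacle — is the bound on the homomorphic-image automaton for $\vphi(Y^*)$, and more broadly being honest about the fact that the constructions behind Theorem \ref{prop-cal} for intersection and product give a product of state counts while projection and homomorphic image a priori produce nondeterministic automata. Since $|\mathcal{A}(\cdot)|$ refers to the minimal (hence deterministic) automaton, I should either state the estimate for NFAs and note that the minimal DFA is no larger in the relevant sense, or — cleaner — observe that each of these operations can be realized keeping a deterministic automaton of the stated size, except possibly projection, for which I would simply remark that the subset construction is not needed here because we only need \emph{some} automaton of that size and then appeal to the fact that the minimal DFA has at most as many states as any accepting NFA is false in general, so the correct move is to phrase the corollary's $|\mathcal{A}(L_H)|$ as bounded by the size of the particular (deterministic, where possible) automaton we construct. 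I would resolve this by building, at each stage, a deterministic automaton whose size is controlled — using that intersection and homomorphic-image-by-geodesic-substitution both admit deterministic constructions of the product/block size — so that the final count is a genuine upper bound on $|\mathcal{A}(L_H)|$. The rest is routine multiplication of the four factors $(2R+1)$, $|\mathcal{A}(L)|$, $\gamma_H(2R+1)$, and $\gamma_G(3R+1)+1$.
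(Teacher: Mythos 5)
Your proposal is correct and follows essentially the same route as the paper: bound $|\mathcal{A}(\vphi(Y^*))|$ by $(2R+1)\cdot\gamma_H(2R+1)$, bound $|\mathcal{A}(\mathcal{L}_M)|$ by $\gamma_G(3R+1)+1$ via the automaton from Proposition \ref{L-aft}, use product constructions for $L'$, $L''$ and the final intersection, and note that projection does not increase the state count. Your extra care about determinism is harmless but not needed, since $\mathcal{A}(L_H)$ is minimal among all accepting automata, so the (possibly nondeterministic) automaton of the stated size already witnesses the bound.
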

\begin{proof}
Recall from the proof of Proposition \ref{convex-crit} that we can take $L_H = \vphi(Y^*) \cap \tilde L$, where $\tilde L$ is given by Lemma \ref{L-tilde}, and $Y = \{h \in H : |h| \leq 2R + 1\}$. Recall that for $y \in Y$, $\vphi(y)$ is a geodesic in $X$ representing $y$. 
It is easy to show that $|\mathcal{A}(\vphi(Y^*))| \leq (2R + 1)\cdot \gamma_H(2R + 1)$. The states of an automaton accepting an intersection of two regular languages are given by the product of the states of the two automaton accepting each of the languages. Therefore, we have $$|\mathcal{A}(L_H)| \leq (2R + 1) \cdot \gamma_H(2R + 1) \cdot |\mathcal{A}(\tilde L)|.$$

It remains to bound $|\mathcal{A}(\tilde L)|$. Set $M = 3R + 1$. Set $\mathcal{L}_M$ to be the language in Proposition \ref{L-aft}. It follows from the proof of Proposition \ref{L-aft} that $\mathcal{A} (\mathcal{L}_M)$ has at most $\gamma_G(3R + 1) + 1$ states. Finally, set $\tilde L, L', L''$ and $L^\$$ to be as in Lemma \ref{L-tilde}. Recall from the proof of Lemma \ref{L-tilde} that $L' = L^\$ \times (\X)^*, L'' = \L_M \intersect L'$ and $\tilde L = \text{Proj}_2(L'')$. Since taking a projection does not increase the number of states, $|\mathcal{A}(\tilde L)| \leq |\mathcal{A}(L'')|$. By the previous remark about intersection, $|\mathcal{A}(L'')| \leq |\mathcal{A}(L')| \cdot |\mathcal{A}(\mathcal{L}_M)|$. One can construct an automaton for $L' = L^\$ \times (\X)^*$ by taking the product of automata for $L^\$$ and $(\X)^*$. By Remark \ref{fg-rmk} and Definition \ref{padded-lang}, $|\mathcal{A}((\X)^*)| = 1$ and $|\mathcal{A}(L^\$)| = |\mathcal{A}(L)|$. We conclude that $|\mathcal{A}(L')| \leq |\mathcal{A}(L)|$. Putting it all together, the corollary follows. 
\end{proof}

We will now use our language-convexity criterion to show that finite index subgroups are language-convex with respect to any language. We will then apply our result in the proof of Corollary \ref{main-thm2} to show that there is a regular positive cone for $F_2 \times \ints$. We will then show a stronger version of that result by independently constructing a positive cone for $F_2 \times \mathbb{Z}$ which is finitely generated as a semigroup. 

\begin{lemma}\label{finite-index}
Let $H$ be a language-convex subgroup of a finitely generated group $G$ with respect to a language $L$. If $K$ is a finite index subgroup of $H$, then $K$ is also language-convex subgroup of $G$ with respect to $L$.
\end{lemma}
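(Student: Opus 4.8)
The plan is to leverage the fact that language-convexity only requires the induced path of each $w \in L$ with $\bar w \in K$ to stay within bounded distance of $K$, and that $K$ having finite index in $H$ means $K$ is "coarsely dense" in $H$ at bounded scale. Concretely, since $K$ has finite index $k$ in $H$, pick coset representatives $h_1, \dots, h_k \in H$ for the (say left) cosets of $K$ in $H$, and let $D = \max_j |h_j|$ measured in the word metric of $G$ coming from $X$. Then every element of $H$ is within distance $D$ of $K$: for $h \in H$, write $h = h' h_j$ with $h' \in K$, so $d(h, h') = d(h, h h_j^{-1} \cdot h_j) \le |h_j| \le D$ — wait, more carefully, $h h_j^{-1} = h'$ only if we use right cosets $K h_j$; either convention works, so fix $h = h_j'' h'$ with $h' \in K$ (left cosets $h_j'' K$), giving $d(h, h_j''^{-1} h) = |h_j''| \le D$ and $h_j''^{-1} h \in K$. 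The upshot is a constant $D$ with $H \subseteq N_D(K)$ in $\Gamma$.

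Next, I would combine this with the convexity hypothesis for $H$. Let $R$ be the convexity parameter of $H$ with respect to $L$. Take any $w \in L$ with $\bar w \in K \subseteq H$. By language-convexity of $H$, the induced path $p_w$ lies within distance $R$ of $H$ in $\Gamma$. Since $H \subseteq N_D(K)$, every point of $H$ is within distance $D$ of $K$, so by the triangle inequality $p_w$ lies within distance $R + D$ of $K$. Hence $K$ is language-convex with respect to $L$ with convexity parameter $R + D$. That is the whole argument.

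The only point requiring a small amount of care — and the step I'd flag as the "main obstacle", though it is routine — is the claim that finite index inside $H$ translates into bounded-distance coarse density inside $\Gamma$ using the \emph{ambient} word metric of $G$ (not the word metric of $H$, which might not even be finitely generated a priori, though here $H$ is finitely generated since $G$ is and... actually $H$ need not be finitely generated in general, so it is important that we never need a generating set for $H$). The resolution is exactly as above: one only needs finitely many coset representatives $h_1,\dots,h_k$, each of which has some finite $X$-length since $X$ generates $G$, and $D$ is just the max of these finitely many lengths. No generating set for $H$ or $K$ is needed. I would write the lemma's proof in three or four sentences along these lines.

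\begin{proof}[Proof of Lemma \ref{finite-index}]
Let $R \geq 0$ be the convexity parameter of $H$ with respect to $L$, as in Definition \ref{lconvex}. Since $K$ has finite index in $H$, choose finitely many left coset representatives $h_1, \dots, h_k \in H$ for the cosets of $K$ in $H$, and set $D = \max_{1 \leq j \leq k} |h_j|$, the maximum being taken with respect to the word metric $d$ on $\Gamma$ coming from $X$. Then $H \subseteq N_D(K)$ in $\Gamma$: given $h \in H$, write $h = h_j k'$ for some $j$ and some $k' \in K$, so that $k' = h_j\inv h \in K$ and $d(h, k') = |h_j| \leq D$.

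Now let $w \in L$ with $\bar w \in K$. Since $K \subseteq H$, language-convexity of $H$ gives that the induced path $p_w$ lies within distance $R$ of $H$ in $\Gamma$. Combined with $H \subseteq N_D(K)$, the triangle inequality shows that $p_w$ lies within distance $R + D$ of $K$. As $w$ was arbitrary, $K$ is language-convex with respect to $L$ with convexity parameter $R + D$.
\end{proof}
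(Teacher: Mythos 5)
Your overall strategy is exactly the paper's: produce a constant $D$ with $H\subseteq N_D(K)$ from finitely many coset representatives, then concatenate with the convexity constant $R$ of $H$ to get $p_w\subseteq N_{R+D}(K)$. That part is fine. However, there is a concrete error in the step establishing $H\subseteq N_D(K)$: you decompose $h$ along \emph{left} cosets, $h=h_jk'$ with $k'=h_j^{-1}h\in K$, and assert $d(h,k')=|h_j|$. With the word metric on the Cayley graph used in this paper, $d(g_1,g_2)=|g_1^{-1}g_2|$ (it is left-invariant), so in fact
$$d(h,k')=\bigl|h^{-1}h_j^{-1}h\bigr|,$$
the length of a \emph{conjugate} of $h_j^{-1}$, which is not bounded by $|h_j|$ in general (e.g.\ $|b^{-n}ab^{n}|=2n+1$ in $F_2$ while $|a|=1$). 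Your planning paragraph already brushed against this ("either convention works") — but only one convention works with a left-invariant metric.

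The fix is one line and is what the paper does: use \emph{right} cosets $Kh_j$. Writing $h=k'h_j$ with $k'=hh_j^{-1}\in K$ gives $d(h,k')=|h^{-1}k'|=|h_j^{-1}|=|h_j|\le D$, so $H\subseteq N_D(K)$ as desired. (Equivalently, for left cosets one must choose, in each left coset $h^{-1}K$, an element of bounded length, which again reduces to finiteness of the number of cosets.) With that correction, the remainder of your argument — $p_w\subseteq N_R(H)\subseteq N_{R+D}(K)$ by the triangle inequality — is correct and coincides with the paper's proof, which takes convexity parameter $R+R'$ with $R'=\max_j|h_j|$.
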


\begin{proof}
Let $R$ be the language-convexity constant of $H$ with respect to $L$. Assume that  $K$ is a finite index subgroup of $H$. Let $C = \{h_1, \dots, h_n\}$ be a list of coset representatives of $K$ in $H$. Let $R' = \max_{h_i \in C} |h_i|$.  If $h \in H$, then there exists an $i \in \{1, \dots, n\}$ such that $h = kh_i$. Then $d(h,k) = |h_i\inv k\inv k| = |h_i| \leq R'$. This shows that $H \subseteq N_{R'}(K)$. 

Then for all $w \in L$ with $\bar w \in K$, we have that $p_w \subseteq N_R(H)$ by language-convexity of $H$. Moreover, $N_R(H) \subseteq N_R(N_{R'}(K)) = N_{R + R'}(K)$. Therefore $K$ is language-convex with convexity parameter $(R + R')$.
\end{proof}
Theorem \ref{thm-fi} is a corollary of this lemma. 

A particularly nice application of this lemma is on the braid group on three strands, which admits a regular positive cone. This group has a finite index subgroup isomorphic to $F_2 \times \ints$, which inherits a regular positive cone by Lemma \ref{finite-index}.  We will review the this material in the proof of Corollary \ref{main-thm2}.

\section{Constructing an infinite family of groups with $k$-generated positive cones}\label{inf-fam}

Consider the group $\Gamma_n = \langle a,b : ba^nb = a \rangle$. Let $\Delta = a^{n+1}$. Note that $\Delta$ is central in $\Gamma_n$. Indeed, 
\begin{align*}
b\Delta &= ba^{n+1} 
= (ba^n)a 
= (ab\inv)a 
= a(b\inv a) 
= a(a^n b) 
= \Delta b. 
\end{align*}
By \cite{Navas2011}, $\Gamma_n$ admits a positive cone $P_n = \langle a,b \rangle^+$ for all integer $n \geq 1$. 
The following lemma will be useful in the study of positive cones of subgroups of $\Gamma_n$. 

\begin{lemma}\label{in-P}
For all integer $n \geq 1$, the element $b^{-s}a$ where $s \geq 0$ belongs to $P_n$.
\end{lemma}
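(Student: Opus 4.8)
The plan is to proceed by induction on $s$. The base case $s=0$ is immediate: $a = \pi(a) \in \langle a, b\rangle^+ = P_n$. For the inductive step, I would like to express $b^{-(s+1)}a$ in a way that manifestly lands in $P_n$, using the defining relation $ba^nb = a$, which can be rewritten as $a^n b = b\inv a$, or equivalently $b\inv a = a^n b$. This last identity is the crucial tool: it trades a leading $b\inv$ for a trailing $b$ at the cost of inserting $a^n$ in front (and $a^n \in P_n$ since $n \geq 1$).

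Concretely, starting from $b^{-(s+1)}a = b^{-s}(b\inv a) = b^{-s}(a^n b)$, I would then need to push the $a^n$ leftward through the remaining $b^{-s}$ block, or else commute things using the central element $\Delta = a^{n+1}$. One clean route: $b^{-s} a^n b = (b^{-s} a^n) b$, and I can try to write $b^{-s}a^n$ in terms of already-controlled elements. Actually the slicker approach is to observe $b^{-s}a^n = b^{-s} a \cdot a^{n-1}$, and since $a^{n-1} \in P_n \cup \{1\}$ and $b^{-s}a \in P_n$ by the inductive hypothesis, and $P_n$ is a semigroup, $b^{-s}a^n \in P_n$; then $b^{-s}a^n b$ — hmm, but multiplying on the right by $b \in P_n$ keeps us in $P_n$. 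So $b^{-(s+1)}a = (b^{-s}a^n)b \in P_n \cdot P_n \subseteq P_n$. This closes the induction, provided $n \geq 1$ so that $a^{n-1}$ is a (possibly empty) positive word and $a^n$ is a genuine positive word.

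I expect the main subtlety — and the step to state carefully rather than wave at — is the manipulation $b^{-s}a^n = (b^{-s}a)a^{n-1}$ together with the semigroup closure: one must be sure that $b^{-s}a$, $a^{n-1}$, and $b$ are each in $P_n$ (or trivial), and then invoke $P_n P_n \subseteq P_n$. There is no hard geometry here; the only thing to get right is bookkeeping with the relation $b\inv a = a^n b$ and making sure the induction is set up so that the hypothesis "$b^{-s}a \in P_n$" is exactly what's needed to feed the next step. If one prefers to avoid the case split on whether $a^{n-1}$ is trivial, one can instead write $b^{-(s+1)}a = b^{-s}(a^n b) = (b^{-s}a)(a^{n-1}b)$ and note $a^{n-1}b$ is a positive word (nonempty since it contains $b$), so it lies in $P_n$; then $b^{-(s+1)}a \in P_n \cdot P_n \subseteq P_n$, and the base case $s = 0$ handles everything uniformly.
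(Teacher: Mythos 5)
Your proof is correct and follows essentially the same route as the paper: the paper proves by induction the explicit identity $b^{-s}a = a(a^{n-1}b)^{s}$ using the relation $b\inv a = a^{n}b$, and your inductive step $b^{-(s+1)}a = (b^{-s}a)(a^{n-1}b)$ is exactly that recursion, unrolled. The only cosmetic difference is that you track membership in $P_n$ directly rather than writing down the closed-form positive word.
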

\begin{proof}
We will show this by induction on $s$ that $b^{-s}a = a(a^{n-1}b)^s$. If $s=0$, then $$b^0a = a = a(a^{n-1}b)^0.$$

As for the $s \imp s+1$ case, 
\begin{align*}
b^{-(s+1)}a &= b^{-s}b\inv a \\
&= a(a^{n-1}b)^s a\inv (b\inv a) & (a^n b = b\inv a)\\
&= a(a^{n-1}b)^s a\inv a^nb\\ 
&= a(a^{n-1}b)^s a^{n-1}b \\
&= a(a^{n-1}b)^{s+1}.
\end{align*}
Since $P_n = \langle a, b \rangle^+$, the element $b^{-s}a = a(a^{n-1}b)^{s}$ clearly belongs to $P_n$. 
\end{proof}

We are now going to look at a particular class of finite-index subgroups of $\Gamma_n$. Let $n,m$, and $\mu$ be such that 
$$(n-1)\mu \equiv -2 \mod m.$$
Note that given a fixed pair $n$ and $m$, there may not necessarily be a solution for $\mu$. For each triple $n,m$, and $\mu$ satisfying the equation above, define a homomorphism $\vphi: \Gamma_n \to \mathbb{Z}/m\mathbb{Z}$ by setting $\vphi(a) = \mu$ and $\vphi(b) = 1$. We check that the map is a homomorphism by verifying that the relation is satisfied in the image $$\vphi(ba^nba\inv) \equiv 2 + (n-1)\vphi(a) \equiv 0 \mod m.$$

\begin{proposition}\label{pos-cone-gen}
Let $n,m$ and $\mu$ be such that $(n-1)\mu \equiv -2 \mod m$. Let $\vphi: \Gamma_n \to \mathbb{Z}/m\mathbb{Z}$ be a homomorphism such that $\vphi(a) = \mu$ and $\vphi(b) = 1$. Let $H := \ker \vphi$ and let $P = \langle a , b \rangle^+$ be a positive cone for $\Gamma_n$ (which is proven in \cite{Navas2011}). Then $H \cap P$ admits the finite generating set $Y$, where $$Y := \{b^{-s}ab^{s+(m-\mu)}\}_{s=0}^{\mu-1} \cup \{b^{-s}ab^{s-\mu}\}_{s=\mu}^{m-1} \union \{b^m\}.$$
\end{proposition}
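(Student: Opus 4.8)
The plan is to prove Proposition \ref{pos-cone-gen} in two stages: first show that $H \cap P$ contains the semigroup $\langle Y \rangle^+$, and then show the reverse inclusion, namely that every element of $H \cap P$ can be written as a positive word in $Y$. For the first inclusion, I would begin by checking that each generator listed in $Y$ actually lies in $H \cap P$. Membership in $P = \langle a,b\rangle^+$ is handled using Lemma \ref{in-P}: indeed $b^{-s}a \in P_n$, and then $b^{-s}ab^{s+(m-\mu)}$ (resp. $b^{-s}ab^{s-\mu}$ when $s \geq \mu$) is a product of the element $b^{-s}a \in P$ with a nonnegative power of $b \in P$ in the first case, while the $s \geq \mu$ case needs a small rewriting to see it is still a positive word — one uses $b^{-s}a = a(a^{n-1}b)^s$ from the proof of Lemma \ref{in-P} so that $b^{-s}ab^{s-\mu} = a(a^{n-1}b)^s b^{s-\mu}$, visibly positive. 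That $b^m \in P$ is immediate. For membership in $H = \ker\vphi$, I compute $\vphi$ on each generator using $\vphi(a) = \mu$, $\vphi(b) = 1$: for $0 \le s \le \mu-1$, $\vphi(b^{-s}ab^{s+(m-\mu)}) = -s + \mu + s + m - \mu \equiv 0 \bmod m$; for $\mu \le s \le m-1$, $\vphi(b^{-s}ab^{s-\mu}) = -s + \mu + s - \mu = 0$; and $\vphi(b^m) = m \equiv 0$. So $\langle Y\rangle^+ \subseteq H \cap P$.

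**The reverse inclusion.**

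For $H \cap P \subseteq \langle Y \rangle^+$, the idea is that any $g \in H \cap P$ has a positive representative $w = c_1 c_2 \cdots c_N$ with each $c_i \in \{a, b\}$ (since $P = \langle a,b\rangle^+$). I want to parse this word into blocks each of which is a generator in $Y$. The natural approach: track the partial sums $\vphi$ of prefixes modulo $m$, or equivalently track which coset of $H$ each prefix lies in. Since the word evaluates into $H$, the prefix-cosets form a walk that starts and ends at the identity coset. I would like to cut $w$ at the places where a prefix lands back in $H$, but that alone does not give generators from $Y$ — instead the structure of $Y$ suggests rewriting $w$ by "transporting" each letter $a$ together with a compensating power of $b$. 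Concretely: reading $w$ left to right, each time we encounter an $a$, the current prefix has some $\vphi$-value $s \bmod m$ (determined by how many $b$'s have been read, since $\vphi(b)=1$ and we can track modulo $m$); conjugating that $a$ by the accumulated $b$'s turns the segment into a $b^{-s} a b^{s}$-type expression, and then the extra trailing power of $b$ (either $b^{m-\mu}$ or $b^{-\mu}$ absorbed appropriately) is exactly what makes the block a $Y$-generator with the right residue bookkeeping. The powers of $b$ not consumed this way get grouped into powers of $b^m$, which is in $Y$. The central element $\Delta = a^{n+1}$ and the relation $a^n b = b^{-1}a$ will be needed to push things around and to verify that after all the transpositions the total exponent of $b$ is a nonnegative multiple of $m$ (this uses $g \in H$, i.e. $\vphi(g) = 0$) so that leftover $b$'s genuinely form a positive power of $b^m$ and no negative power of $b$ appears uncancelled.

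**The main obstacle.**

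I expect the delicate point to be controlling the \emph{signs}: after the rewriting we must be sure that every intermediate expression stays a \emph{positive} word in $Y$ — i.e. we never need $b^{-m}$ or an inverse of a $Y$-generator. This requires showing that whenever we pull an $a$ past a block of $b$'s to form $b^{-s}ab^{s\pm\dots}$, the number of $b$'s already available (from the positive word $w$, plus those freed up by previous rearrangements) is enough. The choice of transversal in the definition of $Y$ — splitting at $s = \mu$ into the two families with trailing exponents $s+(m-\mu)$ versus $s-\mu$ — is precisely engineered so that the trailing $b$-exponent is always taken modulo $m$ into the range $\{0,\dots,m-1\}$ plus possibly a clean $b^m$; making this accounting rigorous, and confirming it closes up using $\vphi(g)=0$ and the fact that $b\in P$ so surplus $b$'s are never a problem, is the crux. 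A secondary technical check is that $\vphi$ is well-defined (already done in the excerpt) and that the two index ranges in $Y$ are nonempty and exhaust the residues — a routine but necessary verification. Once the transposition/accounting lemma is in place, both inclusions combine to give $\langle Y \rangle^+ = H \cap P$, proving the proposition.
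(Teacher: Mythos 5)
Your first inclusion is in order: each element of $Y$ lies in $P$ because it is a product of $b^{-s}a\in P$ (Lemma \ref{in-P}) with a nonnegative power of $b\in P$, and lies in $H$ by the direct $\vphi$-computation you give. But the reverse inclusion $H\cap P\subseteq\langle Y\rangle^+$ --- which is the heart of the proposition --- is not proved in your proposal: you sketch a rewriting scheme and then explicitly leave its positivity/accounting step open as ``the crux''. That step is exactly what a proof must supply, and the mechanism you propose for it (pushing each $a$ past blocks of $b$'s inside $\Gamma_n$ using $a^nb=b^{-1}a$ and the centrality of $\Delta$, then checking that the leftover $b$-exponent is a nonnegative multiple of $m$) is shakier than you suggest: the residue of a prefix is \emph{not} determined by the number of $b$'s read, since each $a$ contributes $\mu$ to $\vphi$ as well, and every application of $a^nb=b^{-1}a$ changes the number of $a$'s and $b$'s in the word, so the ``surplus $b$'' bookkeeping you rely on has no evident invariant to induct on.

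The missing idea, which is how the paper closes this gap, is that no group relation and no accounting are needed at all: $Y$ arises from the Reidemeister--Schreier method (Proposition \ref{RS}, Lemma \ref{Y-as-gen}) with Schreier transversal $T=\{1,b^{-1},\dots,b^{-(m-1)}\}$, and for any word $w=x_1\cdots x_\ell$ with $\bar w\in H$ one has the purely formal telescoping identity, valid already in the free group,
\begin{equation*}
w \,=\, \prod_{i=1}^{\ell}\,\overline{w_{i-1}}\,x_i\,\overline{w_i}^{\,-1} \,=\, \prod_{i=1}^{\ell}\,\gamma\bigl(\overline{w_{i-1}},x_i\bigr)^{*},
\end{equation*}
where $\overline{w_i}\in T$ is the coset representative of the prefix $w_i$. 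Because $w\in\langle a,b\rangle^+$, every letter $x_i$ is a \emph{positive} letter $a$ or $b$, so every factor $\gamma(\overline{w_{i-1}},x_i)^{*}$ is an element of $Y$ or trivial --- never an inverse of a generator. Positivity of the rewritten word $\tau(w)$ is therefore automatic, with no use of $ba^nb=a$, no centrality of $\Delta$, and no count of ``available'' $b$'s: the letters are never moved past one another, they are only re-read against the coset representatives, and these conjugators cancel in pairs. Since $\pi(\tau(w))=\pi(w)$ and $\pi(w)\neq 1$, this yields $\pi(w)\in\langle Y\rangle^+$ and completes the reverse inclusion. Without this (or a genuinely completed version of your transport argument), your proposal establishes only $\langle Y\rangle^+\subseteq H\cap P$.
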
 

The proof of this proposition will rely on the Reidemeister-Schreier method, which we recall below. 

\begin{definition}[Schreier transversal] Let $F$ be a free group, and $\tilde H$ be a subgroup of $F$. A \emph{Schreier transversal} $T$ of $\tilde H$ is a subset of $F$ such that for distinct $t \in T$, the cosets $\tilde Ht$ are distinct, $\bigcup_{t \in T} \tilde Ht = F$, and such that each initial segment (prefix) of an element of $T$ belongs to $T$. 
\end{definition}

\begin{proposition}[Reidemeister-Schreier method \cite{LyndonSchupp2001}]\label{RS}
Let $G = F/N$, where $F$ is free with basis $X$ and $N$ is the normal closure of the relator set $R$. Let $\phi: F \to F/N$ be the natural map of $F$ onto $G$. Let $H$ be a subgroup of $G$ with $\tilde H$ as the inverse image under $\phi$, and let $T$ be a Schreier transversal for $\tilde H$ in $F$. For $w$ in $F$, we define $\bar w$ by the condition that 
$$\tilde Hw = \tilde H\bar w, \quad \bar w \in T.$$
For $t \in T$, $x \in X$, we define 
$$\gamma(t,x)^* = tx(\ovl{tx})\inv.$$

Define a one-to-one correspondence between $\gamma(t,x)^*$ and $\gamma(t,x)$. Then $H$ has presentation $\langle Y \mid S \rangle$ where $Y = \{\gamma(t,x) : t \in T, x \in X, \gamma(t,x)^*\not= 1\}$. 

Let $F'$ be the free group generated with basis $Y$. Define $\tau: F \to F'$ as follows. If $w = y_1 \dots y_\ell$, 
$$\tau(w) = \gamma(1,y_1) \dots \gamma(\ovl{y_1 \dots y_{i-1}}, y_i) \dots \gamma(\ovl{y_1 \dots y_{\ell-1}}, y_\ell).$$
Then $S = \{\tau(trt\inv) : t \in T, r \in R\}$.
\end{proposition}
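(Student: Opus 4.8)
The plan is to prove the Reidemeister–Schreier theorem in two independent stages and then glue them. The bridge is the observation that since $H \leq G = F/N$ and $\tilde H = \phi\inv(H)$, the kernel $N = \phi\inv(1)$ satisfies $N \leq \tilde H$, so that $\phi$ induces an isomorphism $\tilde H/N \cong H$. Thus it suffices to (i) prove the Nielsen–Schreier statement that $\tilde H$ is \emph{free}, with the nontrivial elements $\gamma(t,x)^*$ as a free basis, so that $F'$ on the abstract symbols $Y = \{\gamma(t,x) : \gamma(t,x)^* \neq 1\}$ maps isomorphically onto $\tilde H$; and (ii) show that $N$ is the normal closure \emph{in $\tilde H$} of $\{trt\inv : t\in T, r \in R\}$. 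Given (i) and (ii), expressing each normal generator $trt\inv$ as a word in $Y$ via the rewriting $\tau$ turns $\tilde H/N$ into $\langle Y \mid S\rangle$ with $S = \{\tau(trt\inv)\}$, by the universal property of a free group modulo a normal closure.

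For stage (i) the tools are the rewriting map $\tau$ and the evaluation map $\psi : F' \to F$ sending each symbol $\gamma(t,x)$ to $\gamma(t,x)^* = tx(\ovl{tx})\inv \in \tilde H$. First I would check $\tau$ is well defined on $F$, i.e.\ invariant under free reduction, which reduces to $\gamma(\ovl{ta}, a\inv) = \gamma(t,a)\inv$ in $F'$ and follows from $\ovl{\ovl{ta}\,a\inv} = t$. The central telescoping computation is that for $w = a_1 \cdots a_\ell \in \tilde H$,
$$\psi(\tau(w)) = \prod_{i=1}^{\ell} \ovl{w_{i-1}}\, a_i\, \ovl{w_i}\inv = \ovl{w_0}\, w\, \ovl{w_\ell}\inv = w,$$
using $\ovl{w_0} = 1$ and $\ovl{w_\ell} = \ovl w = 1$. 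This gives $\psi\circ\tau = \mathrm{id}_{\tilde H}$, so $\psi$ is surjective (the $\gamma(t,x)^*$ generate $\tilde H$) and $\tau|_{\tilde H}$ is injective. For freeness I would then prove $\tau\circ\psi = \mathrm{id}_{F'}$. Here the Schreier (prefix-closed) property is essential: for $s \in T$ every prefix lies in $T$, so reading $s$ letter by letter produces only trivial Schreier elements and $\tau(s) = 1$ in $F'$; running the letter-by-letter definition of $\tau$ on $tx(\ovl{tx})\inv$ then kills every contribution except the single surviving symbol, yielding $\tau(\gamma(t,x)^*) = \gamma(t,x)$. Combined with multiplicativity of $\tau$ on $\tilde H$ (the state resets to $\ovl w = 1$ between factors), this gives $\tau\circ\psi = \mathrm{id}$, so $\psi$ is an isomorphism and $\tilde H$ is free on $Y$.

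For stage (ii), $N$ is a priori the normal closure of $R$ in $F$, generated as a subgroup by the conjugates $f r^{\pm 1} f\inv$. Writing any $f \in F$ as $f = h t$, where $t \in T$ represents $\tilde H f$ and $h = f t\inv \in \tilde H$, one computes $f r f\inv = h\,(t r t\inv)\,h\inv$, an $\tilde H$-conjugate of $t r t\inv \in N \leq \tilde H$. Hence $N$ is normally generated in $\tilde H$ precisely by the elements $trt\inv$. Transporting these along $\psi\inv = \tau$ expresses them as the words $\tau(trt\inv)$ in $Y$, so $H \cong \tilde H/N \cong \langle Y \mid S\rangle$.

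The main obstacle is the freeness identity $\tau(\gamma(t,x)^*) = \gamma(t,x)$ in stage (i): it is the only place where the Schreier property is genuinely used, and the bookkeeping is delicate because $tx(\ovl{tx})\inv$ need not be freely reduced, so one must argue that the cancellations occurring in $F$ are exactly matched by the well-definedness of $\tau$ in $F'$. Everything else is either the telescoping identity above or a formal consequence of the universal property of free groups.
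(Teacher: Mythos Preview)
The paper does not prove Proposition~\ref{RS} at all; it is quoted as the classical Reidemeister--Schreier method with a citation to Lyndon--Schupp and then used as a black box in Sections~\ref{inf-fam}. So there is no ``paper's own proof'' to compare against.

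That said, your outline is the standard Lyndon--Schupp argument and is correct. The two-stage decomposition (Nielsen--Schreier freeness of $\tilde H$ via the mutually inverse maps $\tau$ and $\psi$, followed by the conjugation trick $f = ht$ to show $N$ is normally generated in $\tilde H$ by $\{trt\inv\}$) is exactly the textbook route. Your identification of the one nontrivial step---that the Schreier prefix-closure is what makes $\tau(\gamma(t,x)^*) = \gamma(t,x)$ hold, because reading a transversal element or its inverse contributes only trivial Schreier symbols---is accurate, and your telescoping computation for $\psi\circ\tau = \mathrm{id}_{\tilde H}$ is the right one. One small point worth making explicit in a full write-up: the definition of $\tau$ in the statement only covers letters $y_i \in X$, so you must first extend $\gamma$ to $X\inv$ by $\gamma(t,x\inv) := \gamma(\ovl{tx\inv},x)\inv$ before the well-definedness-under-free-reduction claim even parses; you gesture at this but do not state the extension.
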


\begin{lemma}\label{Y-as-gen} 
Let $\gamma$ and $\gamma^*$ be as in Proposition \ref{RS}, and $\vphi$, $\mu$, and $H$ be as in Proposition \ref{pos-cone-gen}. Then $$Y = \{b^{-s}ab^{s+(m-\mu)}\}_{s=0}^{\mu-1} \cup \{b^{-s}ab^{s-\mu}\}_{s=\mu}^{m-1} \union \{b^m\}$$
generates $H$. 
\end{lemma}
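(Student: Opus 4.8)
The goal is to show that the set $Y = \{b^{-s}ab^{s+(m-\mu)}\}_{s=0}^{\mu-1} \cup \{b^{-s}ab^{s-\mu}\}_{s=\mu}^{m-1} \cup \{b^m\}$ generates $H = \ker\vphi$. The natural route is to apply the Reidemeister–Schreier machinery of Proposition \ref{RS} directly. First I would fix the free group $F = F(a,b)$, let $N$ be the normal closure of $r = ba^nba^{-1}$ so that $\Gamma_n = F/N$, and let $\tilde H$ be the preimage in $F$ of $H$ under the natural map. Since $\vphi$ factors through $\Gamma_n$ and is determined by $a \mapsto \mu$, $b \mapsto 1$ in $\ints/m\ints$, the composite $F \to \ints/m\ints$ has kernel $\tilde H$ of index $m$. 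I would choose the Schreier transversal $T = \{1, b, b^2, \dots, b^{m-1}\}$: this is prefix-closed, and the cosets $\tilde H b^j$ for $0 \le j \le m-1$ are pairwise distinct (their $\vphi$-images are $j \bmod m$) and exhaust $F$, so $T$ is a valid Schreier transversal. This is presumably the referee's suggested transversal mentioned in the acknowledgements.

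**Key steps.** With $T$ fixed, the coset function $\overline{\,\cdot\,}$ sends $w$ to $b^{j}$ where $j \equiv \vphi(w) \bmod m$. I would then compute the generators $\gamma(t,x)^* = tx(\overline{tx})^{-1}$ for $t = b^s$ ($0 \le s \le m-1$) and $x \in \{a,b\}$:
\begin{itemize}
\item For $x = b$: $\gamma(b^s, b)^* = b^s b (\overline{b^{s+1}})^{-1}$. When $0 \le s \le m-2$ this is $b^{s+1}b^{-(s+1)} = 1$, so it contributes no generator; when $s = m-1$ it is $b^m (\overline{b^m})^{-1} = b^m \cdot 1 = b^m$, giving the generator $b^m$.
\item For $x = a$: $\gamma(b^s, a)^* = b^s a (\overline{b^s a})^{-1}$. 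Since $\vphi(b^s a) = s + \mu$, we have $\overline{b^s a} = b^{(s+\mu) \bmod m}$, so $\gamma(b^s,a)^* = b^s a b^{-((s+\mu) \bmod m)}$. Splitting on whether $s + \mu < m$ (i.e. $0 \le s \le m-\mu-1$, reindexed against the definition) or $s + \mu \ge m$ recovers exactly the two families $b^{-s}ab^{s+(m-\mu)}$ and $b^{-s}ab^{s-\mu}$ in the statement — I would just match the index ranges carefully against $Y$ as written (the $s$-ranges $0 \le s \le \mu-1$ versus $\mu \le s \le m-1$ in the paper correspond to $s+\mu \ge m$ versus $s + \mu < m$ after noting $0 \le \mu \le m-1$ may be assumed).
\end{itemize}
None of the $\gamma(b^s,a)^*$ are trivial (they have nonzero $a$-exponent sum as elements of $F$), so all $m$ of them appear. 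By Proposition \ref{RS}, $H = \langle Y \mid S\rangle$ with this $Y$ as generating set; in particular $Y$ generates $H$.

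**Main obstacle.** The only real work is bookkeeping: verifying that the index ranges and exponents produced by the mechanical computation of $\gamma(b^s,a)^*$ line up precisely with the two families written in the statement of the lemma, including the implicit normalization $0 \le \mu \le m-1$ and the case split at $s+\mu = m$. There is also a minor point to address — the elements of $Y$ should be interpreted as their images in $\Gamma_n$ (equivalently, in $H \le \Gamma_n$) under $\phi$, since Proposition \ref{RS} produces an abstract presentation of $H$ whose generators are labelled by the words $\gamma(t,x)$; the evaluation of $\gamma(b^s,x)$ in $\Gamma_n$ is exactly the word displayed. I expect no genuine difficulty beyond careful indexing, since Proposition \ref{RS} does all the heavy lifting.
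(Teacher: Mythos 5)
Your overall strategy is the paper's: apply Reidemeister--Schreier with a transversal consisting of powers of $b$ (the paper's proof of Lemma \ref{Y-as-gen} does exactly this). However, there is a genuine gap in the bookkeeping step, which is precisely what the lemma is about. With your transversal $T=\{1,b,\dots,b^{m-1}\}$ the computation you correctly set up gives $\gamma(b^s,a)^*=b^{s}ab^{-((s+\mu)\bmod m)}$, i.e.\ words with a \emph{non-negative} power of $b$ before $a$ and a \emph{non-positive} power after. The elements of $Y$ have the opposite sign pattern, $b^{-s}ab^{(s-\mu)\bmod m}$, so no amount of reindexing makes your generators ``exactly the two families in the statement.'' Concretely, for $m=3$, $\mu=1$ your transversal yields $\{ab^{-1},\,bab^{-2},\,b^{2}a,\,b^{3}\}$, whereas $Y=\{ab^{2},\,b^{-1}a,\,b^{-2}ab,\,b^{3}\}$; these are different group elements (they differ by factors of $b^{\pm m}$), so Proposition \ref{RS} applied to your transversal does not, by itself, prove that the stated set $Y$ generates $H$.

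The gap is fixable in two ways. Either keep your transversal and add the missing conversion argument (each element of $Y$ is obtained from one of your generators by multiplying by $b^{\pm m}$ on one or both sides, e.g.\ $b^{-s}ab^{s-\mu}=b^{-m}\bigl(b^{m-s}ab^{-(m-s+\mu)}\bigr)b^{m}$ for $s>\mu$, so the two sets generate the same subgroup), or simply take the transversal $T=\{1,b^{-1},\dots,b^{-(m-1)}\}$ of negative powers of $b$ --- also prefix-closed, hence Schreier --- for which $\gamma(b^{-s},a)^*=b^{-s}ab^{(s-\mu)\bmod m}$ and $\gamma(1,b)^*=b^{m}$, literally the elements of $Y$; this is the paper's choice. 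Note also that proving merely that \emph{some} Reidemeister--Schreier generating set generates $H$ is not an adequate substitute here: the specific form of $Y$, with $b^{-s}a$ followed by a non-negative power of $b$, is what Lemma \ref{in-P} and Proposition \ref{pos-cone-gen} later exploit to show $Y\subseteq P$, and your generators $b^{s}ab^{-k}$ do not obviously lie in $P$.
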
 

\begin{proof}
Let $\phi: F_2 \to \Gamma_n$ be the canonical map from a free group on two elements onto $\Gamma_n$. The set $T = \{b^0, b^{-1}, \dots, b^{-(m-1)}\}$ is a Schreier transversal for $\tilde H := \phi \inv (H)$, since the restriction of $\vphi \phi: F_2 \to \mathbb{Z}/m\mathbb{Z}$ is bijective. By the Reidemeister-Schreier method, a generating set for $H$ is given by $\{\gamma(t,x): t \in T$, $x \in \{a,b\}, \gamma(t,x)^*\not=1\}$. Now,  

\begin{align*}
\gamma(b^{-s},a)^* 
&= \begin{cases} 
b^{-s}ab^{-s + \mu} & \text{ if } 0 \leq \mu \leq s  \\
b^{-s}ab^{m-(-s + \mu)} & \text{ if } s + 1 \leq \mu \leq m-1
\end{cases} \\
\gamma(b^{-s},b)^* 
&= \begin{cases} 
1 & \text{ if } 1 \leq s \leq m-1 \\
b^m & \text{ if } s = 0. \end{cases}
\end{align*}
By identifying $\gamma(t,x)$ with $\gamma(t,x)^*$ when it is not the identity, we obtain $Y$ as a generating set for $H$.
\end{proof}

\begin{proof}[Proof of Proposition \ref{pos-cone-gen}]
We will first show that $\langle Y \rangle^+ \subseteq H \cap P$.
We have shown in Lemma \ref{Y-as-gen} that $Y$ generates $H$, and by Lemma \ref{in-P} that $Y \subseteq P$. Thus $Y \subseteq H \cap P$. Since $H \cap P$ is a semigroup, $\langle Y \rangle^+ \subseteq H \cap P$. 

To show that $H \cap P \subseteq \langle Y \rangle^+$, we will show that for every word $w \in \langle a,b \rangle^+$ whose image $\pi(w)$ is in $H$, there is a corresponding word $v \in \langle Y \rangle^+$ such that $\pi( v) = \pi(w)$. 

Write $w = x_1 \dots x_{\ell}$ and let $w_i = x_1 \dots x_i$. Recall the map $\tau: F_2 \to F(Y)$ from the Reidemeister-Schreier method (Proposition \ref{RS}), where $F(Y)$ stands for the free group with basis $Y$. Since $\pi( w) \in H$, $\tau(w)$ is well-defined, and $\pi(\tau(w)) = \pi( w)$ by construction of $\tau$. Furthermore,
$$\tau(w) = \prod_{i=1}^\ell \gamma(\ovl{w_{i-1}},x_i).$$
Since $w \in \langle a,b \rangle^+$, $x_i \in \{a,b\}$ for $1 \leq i \leq \ell$. Thus $\gamma(\ovl{w_{i-1}}, x_i) \in Y \cup \{1\}$ for $1 \leq i \leq \ell$.

Define $v := y_1 \dots y_\ell$ where
$$y_i = \begin{cases} 
\gamma(\ovl{w_{i-1}},x_i) & \text{ if } \gamma(\ovl{w_{i-1}},x_i) \not=1 \\ 
\varepsilon & \text{ if } \gamma(\ovl{w_{i-1},x_i}) = 1 
\end{cases}$$

Then $v \in \langle Y \rangle^+$ and $\pi(v) = \pi(\tau(w)) = \pi( w)$. This shows that $H \cap P = \langle Y \rangle^+$.

\end{proof}

\begin{definition}[Rank] The rank of a finitely generated group (resp. semigroup) is the smallest size of a generating set needed to generate the group (resp. semigroup). 
\end{definition}
This corollary follows from Proposition \ref{pos-cone-gen}.
\begin{corollary}
Let $n,m$ and $\vphi$ be such that $(n-1)\vphi(a) \equiv -2 \mod m$. Let $H := \ker \vphi$ as in Proposition \ref{pos-cone-gen}, and let $P := P_n = \langle a , b \rangle^+$. Then the rank of $H \cap P$ is at most $m+1$. 
\end{corollary}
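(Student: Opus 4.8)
The plan is to read off the rank bound directly from the explicit generating set $Y$ provided by Proposition~\ref{pos-cone-gen}. Recall that Proposition~\ref{pos-cone-gen} asserts that $H \cap P$ is generated as a semigroup by
$$Y = \{b^{-s}ab^{s+(m-\mu)}\}_{s=0}^{\mu-1} \cup \{b^{-s}ab^{s-\mu}\}_{s=\mu}^{m-1} \union \{b^m\},$$
where $\mu = \vphi(a)$ satisfies $(n-1)\mu \equiv -2 \bmod m$. So the whole content of the corollary is simply that $|Y| \leq m+1$.

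First I would count the elements of $Y$. The first family is indexed by $s \in \{0, 1, \dots, \mu-1\}$, contributing $\mu$ elements; the second family is indexed by $s \in \{\mu, \mu+1, \dots, m-1\}$, contributing $m - \mu$ elements; and then there is the single extra generator $b^m$. Adding these up gives $\mu + (m - \mu) + 1 = m + 1$ elements in total, so $|Y| \leq m+1$ (with equality if and only if all listed elements are distinct, which we do not need here). Since the rank of $H \cap P$ as a semigroup is by definition the smallest size of a semigroup generating set, and $Y$ is such a set, the rank is at most $m+1$. I would also note in passing that $0 \le \mu \le m-1$ may be assumed without loss of generality since $\mu$ is only determined modulo $m$, so the two index ranges are genuinely a partition of $\{0,\dots,m-1\}$ and the count is as claimed.

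There is essentially no obstacle: the only thing to be slightly careful about is that the corollary's hypothesis is phrased with $\vphi(a)$ rather than $\mu$, so I would begin by setting $\mu := \vphi(a)$ and observing that the hypothesis $(n-1)\vphi(a) \equiv -2 \bmod m$ is exactly the congruence required by Proposition~\ref{pos-cone-gen}, so that proposition applies verbatim and yields the generating set $Y$ above. The proof is then the one-line cardinality computation of the previous paragraph.

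\begin{proof}
Write $\mu := \vphi(a)$; we may assume $0 \le \mu \le m-1$. By hypothesis $(n-1)\mu \equiv -2 \bmod m$, so Proposition~\ref{pos-cone-gen} applies and gives that $H \cap P$ is generated as a semigroup by
$$Y = \{b^{-s}ab^{s+(m-\mu)}\}_{s=0}^{\mu-1} \cup \{b^{-s}ab^{s-\mu}\}_{s=\mu}^{m-1} \union \{b^m\}.$$
The first set contains at most $\mu$ elements, the second at most $m-\mu$ elements, and the third is a single element, so $|Y| \le \mu + (m-\mu) + 1 = m+1$. Since $Y$ generates $H \cap P$ as a semigroup, the rank of $H \cap P$ is at most $m+1$.
\end{proof}
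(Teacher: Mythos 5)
Your proof is correct and is exactly the argument the paper intends: the corollary is stated as an immediate consequence of Proposition \ref{pos-cone-gen}, and the only content is the observation that the generating set $Y$ has at most $\mu + (m-\mu) + 1 = m+1$ elements. Your added care in normalizing $\mu = \vphi(a)$ to lie in $\{0,\dots,m-1\}$ is a reasonable and harmless clarification.
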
 

Recall that Corollary \ref{main-thm2} states that $F_2 \times \ints$ admits a positive cone which is finitely generated as a semigroup. We will show that this fact is a corollary of Proposition \ref{pos-cone-gen}. 
\begin{proof}[Proof of Proposition \ref{main-thm2}]
Let $n=2, m = 6$. A possible solution for the equation 
$$2 + (n-1)\vphi(a) \equiv 0 \mod 6$$ 
is $\vphi(a) = 4$. Let $H = \ker \vphi$. Then by Proposition \ref{pos-cone-gen}, $H$ admits a positive cone generated by $$Y = \{ab^2, b^{-1}ab^3, b^{-2}ab^4, b^{-3}ab^5, b^{-4}a, b^{-5}ab, b^6\}.$$ A computation in GAP based on the Reduced Reidemeister-Schreier method \cite[Chapter 47]{GAP4} and Tietze transformations \cite[Chapter 48]{GAP4} reveals that $$H \cong F_2\times \ints.$$ Indeed, under the presentation given by $\Gamma_2 = \langle a,b : ba^2b = a \rangle$, we have that $H = \langle ab^2, a^2b^2a^2, a^3 \rangle$. Using the following presentation for $F_2 \times \mathbb{Z}$, $F_2 \times \ints = \langle x,y,z : [x,z] = [y,z] = 1\rangle$, the isomorphism map $\psi: H \to F_2 \times \mathbb{Z}$ is given by
$$\psi(ab^2) = x, \qquad \psi(a^2b^2a^2) = y, \qquad \psi(a^3) = z.$$
Making use of this map and the normal form provided in \cite[Section 1]{Navas2011}, we find that $$\psi(Y) = \{ x, yxz^{-1}, x^{-1}yxz^{-1}, x^{-1}y^{-1}x^{-1}yx, x^{-1}y\inv z^2, x^{-1}y^{-1}xz^2, y^{-1}x^{-1}yxz^{-1} \}.$$
\end{proof}
\begin{remark}
The group $\Gamma_2 = \langle a,b : ba^2b = a \rangle$ is isomorphic to the braid group given by $B_3 = \langle \sigma_1, \sigma_2 : \sigma_1 \sigma_2 \sigma_1 = \sigma_2 \sigma_1 \sigma_2 \rangle$ by identifying $a \mapsto \sigma_1 \sigma_2, b \mapsto \sigma_{2}{\inv}$. 
\end{remark}

So far, we have shown that for certain integers pairs $n \geq 2$ and $m \geq 2$, there exists a homomorphism $\vphi: \Gamma_n \to \mathbb{Z}/m\mathbb{Z}$ which creates a subgroup $H := \ker \vphi$ of index $m$ which admits a positive cone with at most $m+1$ generators. In the sequel, we will show that for every fixed $m$, it is possible to pick an infinite family of $\Gamma_n$'s satisfying a certain criterion on $n$ such that the positive cone of the subgroup $H$ in question has a minimal number of generators that is exactly $m+1$. To aid our proof, we will use the following lemma.

\begin{lemma}\label{rk} Let $G$ be an left-orderable group, and let $P$ be a positive cone of $G$ generated by $r$ elements. If $G$ has rank at least $r$, then $r$ is the rank of $P$. 
\end{lemma}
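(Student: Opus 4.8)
The plan is to argue by contradiction. Suppose $P$ is generated by $r$ elements, $G$ has rank at least $r$, but the rank of $P$ is strictly less than $r$, say $P = \langle q_1, \dots, q_s \rangle^+$ with $s < r$. The key observation is that a positive cone generating set for $G$ as a \emph{semigroup} is automatically a generating set for $G$ as a \emph{group}: since $G = P \sqcup P^{-1} \sqcup \{1\}$, every element of $G$ is either $1$, a positive word in the $q_i$, or the inverse of such a word, hence lies in the subgroup $\langle q_1, \dots, q_s\rangle$. Therefore $\{q_1, \dots, q_s\}$ generates $G$ as a group, which gives $\mathrm{rank}(G) \leq s < r$, contradicting the hypothesis that $G$ has rank at least $r$. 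Combined with the trivial inequality $\mathrm{rank}(P) \leq r$ (which holds by assumption that $P$ is generated by $r$ elements), we conclude $\mathrm{rank}(P) = r$.

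The main (and essentially only) step requiring care is the observation that a semigroup generating set of a positive cone is a group generating set of the ambient group; this is exactly where the partition property $G = P \sqcup P^{-1} \sqcup \{1\}$ from Section \ref{ord} is used. Everything else is a one-line comparison of cardinalities of generating sets. I do not expect any real obstacle here: the statement is a clean bookkeeping argument, and the only subtlety is to make sure one distinguishes semigroup rank from group rank throughout, since $P$ is generated as a semigroup (no inverses allowed) but $G$ is generated as a group.

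One remark worth including: the hypothesis ``$G$ has rank at least $r$'' cannot be dropped, since a priori $P$ could be generated by fewer than $r$ elements as a semigroup even when $r$ elements are given — the lemma simply promotes the given upper bound $r$ on $\mathrm{rank}(P)$ to an exact value by ruling out, via the ambient group, that $P$ needs strictly fewer generators. This is precisely the form in which the lemma will be applied in the sequel: Proposition \ref{pos-cone-gen} exhibits an explicit $(m+1)$-element semigroup generating set $Y$ for $H \cap P$, and to conclude the rank is exactly $m+1$ it will suffice to check that the relevant subgroup $H$ has rank at least $m+1$.
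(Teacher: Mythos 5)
Your argument is correct and is essentially identical to the paper's own proof: both use the partition $G = P \sqcup P^{-1} \sqcup \{1\}$ to promote any semigroup generating set of $P$ to a group generating set of $G$, and then compare cardinalities against the rank of $G$. The only cosmetic difference is that you phrase it as a contradiction while the paper argues directly that any $k$-element generating set of $P$ forces $k \geq r$.
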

\begin{proof}
Clearly, the rank of $P$ is at most $r$. Suppose $P$ can alternatively be generated by a finite set of of cardinality $k$, $P = \langle x_1, \dots, x_k \rangle^+$, then since $G = P \sqcup P\inv \sqcup \{1\},$ we have that $G = \langle x_1, \dots, x_k \rangle$. Therefore, $k \geq r$. \end{proof}

\begin{proposition}\label{H-pres}
Let $n = m-1 + mt$, where $t$ is a non-negative integer. Let $\Gamma_n = \langle a,b \mid ba^nb = a \rangle$. Then $a \mapsto 1, b \mapsto 1$ extends to a surjective homomorphism $\vphi: \Gamma_n \to \mathbb{Z}/m\mathbb{Z}$. The group $H := \ker \vphi$ is a subgroup of $\Gamma_n$ of index $m$ admitting a presentation on $m+1$ generators and $m$ relators, $H = \langle x_0, \dots, x_m \mid S \rangle$ where 
$$S = \{ x_i x_m^{t+1} x_i \mid i = 0, \dots, m-2 \} \cup \{x_{m-1}x_m^t x_{m-1}x_m\inv\}.$$ 
We may embed the generators of $H$ into $\Gamma_n$ by sending $x_i \mapsto a^iba^{-i-1}$ for $i = 0, \dots, m-2$, $x_{m-1} \mapsto a^{m-1}b$, and $ x_m \mapsto a^m$. 
\end{proposition}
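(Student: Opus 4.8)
The plan is to apply the Reidemeister-Schreier method (Proposition \ref{RS}) with the very symmetric transversal $T = \{1, b, b^2, \dots, b^{m-1}\}$, which works precisely because $\vphi$ sends $b \mapsto 1$, so $\vphi\phi$ restricts to a bijection from $\{b^0, \dots, b^{m-1}\}$ onto $\ints/m\ints$; this makes $T$ a Schreier transversal for $\tilde H = \phi\inv(H)$ in $F_2$. First I would compute the Schreier generators $\gamma(b^i, x)^*$ for $x \in \{a,b\}$. For $x = b$: $\gamma(b^i, b)^* = b^i b (\ovl{b^{i+1}})\inv$, which is trivial for $0 \le i \le m-2$ and equals $b^m$ for $i = m-1$. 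For $x = a$: since $\vphi(b^i a) = i + 1 \bmod m$, we get $\ovl{b^i a} = b^{i+1}$ for $0 \le i \le m-2$ and $\ovl{b^{m-1}a} = 1$, so $\gamma(b^i, a)^* = b^i a b^{-i-1}$ for $0 \le i \le m-2$ and $\gamma(b^{m-1}, a)^* = b^{m-1}a$. Labelling $x_i := \gamma(b^i, a)$ for $0 \le i \le m-1$ and $x_m := \gamma(b^{m-1}, b) = b^m$, we obtain the stated generating set, and the embedding $x_i \mapsto a^i b a^{-i-1}$ ($i \le m-2$), $x_{m-1}\mapsto a^{m-1}b$, $x_m \mapsto a^m$ is read off directly from the $\gamma^*$'s; note that in $\Gamma_n$ one has $a^{-m}= b^{-1}a^{-n}b\inv \cdot (\text{shift})$ coming from the relator, but the embedding itself is just the evaluation of the Schreier generators.

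Next I would compute the relators $S = \{\tau(b^i r b^{-i}) : 0 \le i \le m-1\}$, where $r = ba^nba^{-1}$ is the single defining relator of $\Gamma_n$. The map $\tau$ rewrites a word in $F_2$ representing an element of $\tilde H$ as a word in the $x_j$. The key bookkeeping step is to track the coset $\ovl{w_{j-1}}$ as we scan the letters of $b^i r b^{-i}$, reading off a generator $\gamma(b^k, a^{\pm 1})$ or $\gamma(b^k, b^{\pm 1})$ at each step. Since $n = m - 1 + mt$, scanning through $a^n$ from a state $b^k$ cycles the coset index through $\ints/m\ints$ exactly $t$ full times plus $m-1$ extra steps, during which one accumulates $t+1$ factors of $x_m = b^m$ (from the $t$ full loops plus one more wrap) together with the appropriate $x_j$'s; the two $b$'s in $r = b a^n b a\inv$ contribute the conjugating structure. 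Carrying this out for each starting index $i$, the calculation should collapse to $x_i x_m^{t+1} x_i$ for $i = 0, \dots, m-2$ and $x_{m-1} x_m^t x_{m-1} x_m\inv$ for $i = m-1$, matching $S$; the asymmetry of the last relator comes from the fact that the $i = m-1$ conjugate is the one whose scan "wraps" differently because $\ovl{b^{m-1}a} = 1$ rather than $b^{i+1}$.

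The main obstacle is the bookkeeping in the $\tau$-rewriting of the conjugated relators: one must be careful with inverse letters (each $a\inv$ and $b\inv$ in $r$ and in $b^{-i}$ contributes $\gamma(\ovl{w_{j-1}}, x)\inv$ evaluated at the correct intermediate coset), and with the exact count of $x_m$-factors generated as the coset index wraps around $\ints/m\ints$ while reading $a^n$ — this is where the hypothesis $n \equiv m-1 \pmod m$ and the precise value of $t$ enter. A clean way to organize this is to first establish the identity $b^{-k}a b^{k} = $ (a product of $x_j$'s and $x_m$'s) in $H$ by an induction on $k$ analogous to Lemma \ref{in-P}, and then substitute; alternatively, one can verify directly that the images of the proposed relators hold in $\Gamma_n$ under the embedding and that the presentation is correct by a counting argument (the deficiency and index match), but the Reidemeister-Schreier computation is the most transparent route and I would present that. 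Everything else — that $H$ has index $m$, that $\vphi$ is a well-defined surjection (check $\vphi(r) = n + 1 - 1 - \cdots$, here $\vphi(ba^nba\inv) = 1 + n\cdot 1 + 1 - 1 = n + 1 \equiv 0 \bmod m$ since $n \equiv -1$) — is routine.
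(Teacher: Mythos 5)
There is a genuine gap, and it comes from the choice of transversal. You take $T=\{1,b,\dots,b^{m-1}\}$, which is indeed a valid Schreier transversal here since $\vphi(b)=1$; but the Schreier generators it produces are $\gamma(b^i,a)^*=b^iab^{-i-1}$ for $0\le i\le m-2$, $\gamma(b^{m-1},a)^*=b^{m-1}a$, and $\gamma(b^{m-1},b)^*=b^m$ --- conjugates of $a$ by powers of $b$. These are \emph{not} the elements $a^iba^{-i-1}$, $a^{m-1}b$, $a^m$ asserted in the statement (the relator $ba^nb=a$ is not symmetric in $a$ and $b$, so there is no symmetry rescuing this), so the claim that the stated embedding is ``read off directly from the $\gamma^*$'s'' is false for your transversal. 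More seriously, the relator computation you sketch does not go through with this $T$: when you scan $a^n$ in $b^irb^{-i}$, \emph{every} letter $a$ read at a coset $b^k$ contributes the nontrivial generator $\gamma(b^k,a)$, so $\tau(b^irb^{-i})$ is a word of length roughly $n+1$ in the generators (e.g.\ for $i=0$ one gets $y_1y_2\cdots y_{m-1}y_0y_1\cdots y_0^{-1}$), and it does not collapse to $x_ix_m^{t+1}x_i$. Your count ``$t+1$ factors of $x_m$ from the wraps'' is only correct when $\gamma(\cdot,a)$ is trivial except at the wrap-around, i.e.\ when the transversal consists of powers of $a$; you have implicitly switched to that computation midway through the argument.

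The repair is to use $T=\{1,a,a^2,\dots,a^{m-1}\}$ instead (this is the transversal the paper uses). Then $\gamma(a^s,a)$ is trivial for $0\le s\le m-2$ and equals $a^m=x_m$ at $s=m-1$, while $\gamma(a^s,b)^*=a^sba^{-(s+1)}$ for $s\le m-2$ and $\gamma(a^{m-1},b)^*=a^{m-1}b$; this yields exactly the stated generators and embedding, and scanning $a^n=a^{m-1+mt}$ in $\tau(a^sba^nba^{-(s+1)})$ genuinely contributes only $x_m^{t+1}$ (or $x_m^t$ together with a trailing $x_m^{-1}$ in the $s=m-1$ case), which is what makes the relators collapse to $x_sx_m^{t+1}x_s$ and $x_{m-1}x_m^tx_{m-1}x_m^{-1}$. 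The remaining bookkeeping (the two inductions on $t$ for the products of $\gamma(\overline{a^i},a)$, and the cancellation of the $\gamma(\cdot,a^{-1})$ tail) is as you describe, but it must be carried out for this transversal. Your verification that $\vphi$ is well defined is fine.
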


\begin{corollary}\label{Ab-H}
Let $m,n,t$ and $H$ be as defined in Proposition \ref{H-pres}. If $t$ is an odd integer, then the abelianization of $H$, $H_{\text{ab}}$, is isomorphic to $(\mathbb{Z}/2\mathbb{Z})^m \times \mathbb{Z}$.
\end{corollary}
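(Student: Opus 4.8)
The plan is to read $H_{\text{ab}}$ off directly from the finite presentation $H = \langle x_0, \dots, x_m \mid S \rangle$ furnished by Proposition \ref{H-pres}. Abelianizing replaces the free group on $x_0, \dots, x_m$ by $\ints^{m+1}$ with basis $e_0, \dots, e_m$, where $e_i$ is the class of $x_i$, and turns each relator in $S$ into a linear relation: for $i = 0, \dots, m-2$ the relator $x_i x_m^{t+1} x_i$ becomes $2 e_i + (t+1) e_m = 0$, and $x_{m-1} x_m^{t} x_{m-1} x_m\inv$ becomes $2 e_{m-1} + (t-1) e_m = 0$. Thus $H_{\text{ab}} \cong \ints^{m+1}/R$, where $R$ is the subgroup spanned by these $m$ vectors, and it remains to identify this quotient.

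First I would use the hypothesis that $t$ is odd: then $t+1$ and $t-1$ are both even, so $a := (t+1)/2$ and $b := (t-1)/2$ are integers. Change basis of $\ints^{m+1}$ by setting $f_i := e_i + a\, e_m$ for $i = 0, \dots, m-2$, $f_{m-1} := e_{m-1} + b\, e_m$, and $f_m := e_m$. The associated transformation matrix is triangular with $1$'s on the diagonal, hence unimodular, so $\{f_0, \dots, f_m\}$ is again a free $\ints$-basis of $\ints^{m+1}$; and in these coordinates the $m$ defining relations become exactly $2 f_0 = \dots = 2 f_{m-1} = 0$, the $e_m$-terms cancelling precisely because of the choices of $a$ and $b$, while $f_m$ is left unconstrained. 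Hence $H_{\text{ab}} \cong \ints^{m+1}/\langle 2 f_0, \dots, 2 f_{m-1} \rangle \cong (\mathbb{Z}/2\mathbb{Z})^m \times \mathbb{Z}$, as claimed.

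Equivalently, the middle step is simply the computation of the Smith normal form of the $m \times (m+1)$ relation matrix, whose nonzero invariant factors are all equal to $2$ exactly when $t$ is odd. I do not anticipate any genuine obstacle here: the only use of the parity hypothesis is to make $a$ and $b$ integral, and the only point that needs a sentence of justification is that the substitution $e_i \mapsto f_i$ is an automorphism of $\ints^{m+1}$, so that no relation is created or lost and their abelianized content is preserved. (If $t$ were even, the relations $2 e_i + (t+1) e_m = 0$ with $t+1$ odd would no longer diagonalize to pure $2$-torsion, so the clean form $(\mathbb{Z}/2\mathbb{Z})^m \times \mathbb{Z}$ genuinely requires $t$ odd.)
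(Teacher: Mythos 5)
Your proposal is correct and follows essentially the same route as the paper: abelianize the presentation from Proposition \ref{H-pres} to get the relations $2e_i+(t+1)e_m$ and $2e_{m-1}+(t-1)e_m$, then (using that $t$ is odd) perform the unimodular substitution $e_i \mapsto e_i + \frac{t+1}{2}e_m$ (resp.\ $\frac{t-1}{2}e_m$), which is exactly the paper's change of generators $z_i := y_i + \frac{t+1}{2}y_m$, $z_{m-1} := y_{m-1} + \frac{t-1}{2}y_m$ via Tietze transformations. No gaps; the Smith-normal-form remark is just a repackaging of the same computation.
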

\begin{proof}[Proof of Corollary \ref{Ab-H}]
Take the presentation of $H$ as given in Proposition \ref{H-pres}, and make a natural identification $x_i \mapsto y_i$, for $i=0, \dots, m$ from the generators of $H$ to the generators of the abelianization of $H$. Then, $H_{\text{ab}}$ has a presentation with generators $\{y_0, \dots, y_m\}$ and relators $\{2y_i + (t+1)y_m : 0 \leq i \leq m-2\} \cup \{2y_{m-1} + (t-1)y_m \}$.
Assume $t$ is odd, and define $z_i := y_i + \frac{t+1}{2}y_m$ for $i = 0, \dots, m-2$, and $z_{m-1} := y_{m-1} + \frac{t-1}{2}y_m$. By Tietze transformations, we may rewrite the presentation of $H_{{\text{ab}}}$ as 
$$H_{{\text{ab}}} = \langle z_0, \dots, z_{m-1}, y_m \mid z_0^2, \dots, z_{m-1}^2 \rangle,$$
from which we can clearly see that $H_{\text{ab}}$ isomorphic to $(\mathbb{Z}/2\mathbb{Z})^m \times \mathbb{Z}$. 
\end{proof}

\begin{corollary}\label{ab-rank}
Let $m,n,t$ and $H$ be defined as in Proposition \ref{H-pres}. If $t$ is an odd integer, then $H$ has a positive cone of rank $m+1$
\end{corollary}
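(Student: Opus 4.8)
The plan is to combine the upper bound on the rank coming from Proposition \ref{pos-cone-gen} (or directly from the explicit presentation in Proposition \ref{H-pres}) with a matching lower bound obtained via Lemma \ref{rk}. First I would observe that the hypotheses of Proposition \ref{H-pres} (that is, $n = m-1+mt$ with $t$ odd) are a special case of those of Proposition \ref{pos-cone-gen}: taking $\mu = \vphi(a) = 1$, the congruence $(n-1)\mu \equiv -2 \bmod m$ becomes $n-1 \equiv -2 \bmod m$, i.e. $n \equiv m-1 \bmod m$, which holds by the choice of $n$. Hence $H = \ker\vphi$ has index $m$ in $\Gamma_n$, and by Proposition \ref{pos-cone-gen} the intersection $H \cap P_n$ — which is a positive cone for $H$ by the remark in Section \ref{ord} that positive cones restrict to subgroups — is generated as a semigroup by the set $Y$ of size $m+1$. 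So $H$ admits a positive cone generated by $m+1$ elements.

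Next I would establish the lower bound by invoking Lemma \ref{rk}: it suffices to show that $H$ itself has rank at least $m+1$. This is where Corollary \ref{Ab-H} does the work: since $t$ is odd, $H_{\text{ab}} \cong (\mathbb{Z}/2\mathbb{Z})^m \times \mathbb{Z}$, and any generating set of an abelian group of this form must have at least $m+1$ elements (for instance, tensoring with $\mathbb{Z}/2\mathbb{Z}$ yields $(\mathbb{Z}/2\mathbb{Z})^{m+1}$, a vector space of dimension $m+1$ over $\mathbb{F}_2$, so any generating set has size $\geq m+1$). Since the rank of $H$ is bounded below by the rank of its abelianization, $H$ has rank at least $m+1$.

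Finally, applying Lemma \ref{rk} with $G = H$, $P = H \cap P_n$, and $r = m+1$: we have a positive cone generated by $r = m+1$ elements in a group of rank at least $r$, so $r = m+1$ is exactly the rank of the positive cone. This completes the proof.

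The argument is essentially a bookkeeping assembly of results already proved, so there is no serious obstacle; the only point requiring a moment's care is checking that the numerical hypothesis of Corollary \ref{ab-rank} really does fall under the hypothesis of Proposition \ref{pos-cone-gen} (the choice $\mu = 1$), so that the generating set $Y$ of size $m+1$ is available — and, separately, noting that $H \cap P_n$ is genuinely a positive cone of $H$ so that Lemma \ref{rk} applies.
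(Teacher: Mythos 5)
Your proposal is correct and follows essentially the same route as the paper: an upper bound of $m+1$ generators from Proposition \ref{pos-cone-gen}, a lower bound on the rank of $H$ from the abelianization computed in Corollary \ref{Ab-H}, and Lemma \ref{rk} to conclude. The extra details you supply (verifying the congruence with $\mu=1$, and passing from the rank of $H_{\text{ab}}$ to the rank of $H$) are exactly the steps the paper leaves implicit.
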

\begin{proof}
By Corollary \ref{Ab-H}, the subgroup $H$ has an abelianization of rank $m+1$. By Proposition \ref{pos-cone-gen}, $H$ admits a positive cone $P$ with $m+1$ generators. By Lemma \ref{rk}, $m+1$ is the rank of $P$. 
\end{proof}
The previous corollary shows that for any $m+1 \geq 3$, the subgroup $H \leq \Gamma_n$ as defined in Proposition \ref{H-pres} has a positive cone of rank $m+1$ as long as $n$ is of the form $n = m-1 + mt$ with odd integer $t \geq 1$. Therefore, the family $$\{H \leq \Gamma_n \mid n = m-1 + mt, \quad t \geq 1 \text{ and is odd}\}$$ satisfies the statement of Theorem \ref{thm-inf-fam}. 
We will now prove Proposition \ref{H-pres}. 

\begin{proof}[Proof of Proposition \ref{H-pres}]
Let $\phi: F_2 \to \Gamma_n$ be the {}canonical map. We will be using the Reidemeister-Schreier method again, this time with choice of transversal $T = \{1, a, a^2, \dots, a^{m-1}\}$. Our transversal $T$ is a Schreier transversal since the restriction $\vphi \phi: F_2 \to \mathbb{Z}/m\mathbb{Z}$ is also bijective as now $a \mapsto 1$. Recall the functions $\gamma^*, \gamma$ and $\tau$ from Proposition \ref{RS}. We know that $H$ is generated by $\{\gamma(t,x) :  t \in T, x \in \{a,b\}, \gamma(t,x)^*  \not= 1\}$.

Now,
\begin{align*}
\gamma(a^s, a)^* &= \begin{cases} 
1 & 0 \leq s \leq m-2 \\
a^m & s = m-1
\end{cases} 
\\
\gamma(a^s, b)^* &= \begin{cases}
a^s b a^{-(s+1)} & 0 \leq s \leq m-2 \\
a^{m-1} b & s = m-1.
\end{cases}
\end{align*}

Therefore, by identifying $x_s := \gamma(a^s,b)$ with $x_s$ for $s = 0, \dots, m-2$, $\gamma(a^{m-1}, b)$ with $x_{m-1}$ and $\gamma(a^{m-1},a)$ with $x_m$, the set $\{x_0, \dots, x_m\}$ generates $H$.

To compute the relators of $H$, recall that for a word $w = y_1\dots y_\ell$ with prefixes $w_i = y_1\dots y_i$, the function $\tau$ send $w$ to $\tau(w) = \prod_{i=1}^{\ell} \gamma(\overline{w_{i-1}}, y_{i})$. The first case of relators to compute are relators of the form $\tau(a^sba^{n}ba^{-(s+1)})$ where $s = 0, \dots, m-2$. Then,

$$\tau(a^sba^{n}ba^{-(s+1)}) = \left(\prod_{i=0}^{s-1} \gamma(a^i, a) \right) \gamma(a^s, b) \left( \prod_{i=s+1}^{s+n} \gamma(\overline{a^i}, a) \right)  \gamma(\overline{a^{s + n + 1}},b) \left( \prod_{i=0}^{s}\gamma(\overline{a^{s+n+2-i}}, a\inv) \right).$$


Observe that the first factor $\prod_{i=0}^{s-1} \gamma(a^i, a) = 1$. By replacing $n$ with $m-1+mt$, in the last factor, we obtain $\prod_{i=0}^{s}\gamma(\overline{a^{s+n+2-i}}, a\inv) = \prod_{i=0}^{s}\gamma({a^{s+1-i}}, a\inv) = 1$. We are left with 
$$\tau(a^sba^{n}ba^{-(s+1)}) = \gamma(a^s, b) \left( \prod_{i=s+1}^{n+s} \gamma(\overline{a^i}, a) \right)  \gamma(\overline{a^{s +m(t+1)}},b) = x_s \left( \prod_{i=s+1}^{m-1+mt+s} \gamma(\overline{a^i}, a) \right) x_s .$$


We claim by induction on $t$ that 
$\prod_{i=s+1}^{m-1+mt+s} \gamma(\overline{a^i}, a) = x_m^{t+1}.$
The base case $t=0$ gives us
\begin{align*}
\prod_{i=s+1}^{m-1+s} \gamma(\overline{a^i}, a) &=\underbrace{\gamma(a^{s+1},a) \gamma(a^{s+2}, a) \dots \gamma(a^{m-2},a)}_{=1} \underbrace{\gamma(a^{m-1},a)}_{=x_m} \underbrace{\gamma(\overline{a^m}, a) \dots \gamma(\overline{a^{m-1+s}},a)}_{=1} \\
&= x_m.
\end{align*} 
Assuming the hypothesis, 
\begin{align*}
\prod_{i=s+1}^{m-1+mt+s} \gamma(\overline{a^i}, a) &= \left(\prod_{i=s+1}^{m-1+m(t-1)+s} \gamma(\overline{a^i}, a)\right)\left(\prod_{i=mt+s}^{m-1+mt+s} \gamma(\overline{a^i}, a)\right) \\
&= x_m^{t} \left(\prod_{i=mt+s}^{m-1+mt+s}\gamma(\overline{a^i}, a)\right) \\
&= x_m^{t} \left(\prod_{i=s}^{m-1+s} \gamma(\overline{a^i}, a)\right) \\
&= x_m^{t} \underbrace{\gamma(a^s,a)}_{=1} \left(\prod_{s+1}^{m-1+s}\gamma(\overline{a^i},a)\right) \\
&= x_m^{t+1}.
\end{align*} 

Therefore, $\tau(a^sba^nba^{-(s+1)}) = x_s x_m^{t+1} x_s$ for $0 \leq s \leq m-2$. As for the $s=m-1$ case, 
$$\tau(a^{m-1}ba^nba^{-m})=\underbrace{\left(\prod_{i=0}^{m-2}\gamma(a^i, a)\right)}_{=1}\gamma(a^{m-1},b)\left(\prod_{i=m}^{m+n-1}(\gamma(\overline{a^i},a)\right)\gamma(\ovl{a^{m+n}},b)\left(\prod_{i=0}^{m-1}\gamma(\ovl{a^{m+n+1-i}},a\inv)\right)$$ 
We can simplify the last factor as follows, again replacing $n$ by $m-1+mt$.
$$\prod_{i=0}^{m-1}\gamma(\ovl{a^{m+n+1-i}},a\inv) = \prod_{i=0}^{m-1} \gamma(\ovl{a^{m-i}},a\inv) = \udb{\gamma({a^{m-0}},a\inv)}_{=x_m\inv} \udb{\prod_{i=1}^{m-1}\gamma({a^{m-i}},a\inv)}_{=1} = x_m\inv.$$

We claim by induction on $t$ that $\prod_{i=m}^{m+n-1}(\gamma(\overline{a^i},a)) = x_m^t.$ The base case $t=0$ gives 
$$\prod_{i=m}^{2m-2}(\gamma(\overline{a^i},a)) = \prod_{i=0}^{m-2}(\gamma(\overline{a^i},a)) =1.$$
Assuming the hypothesis, 
\begin{align*}
\prod_{i=m}^{m(t+2)-2}(\gamma(\overline{a^i},a)) &= \left(\prod_{i=m}^{m(t+1)-2}(\gamma(\overline{a^i},a))\right) \left(\prod_{i=m(t+1)-1}^{m(t+2)-2}(\gamma(\overline{a^i},a))\right)\\
&= x_m^{t-1}\left(\prod_{i=m-1}^{2m-2}(\gamma(\overline{a^i},a))\right) \\
&=x_m^{t-1} \cdot \gamma(a^{m-1},a)\left(\prod_{i=m}^{2m-2}(\gamma(\overline{a^i},a))\right) \\
&= x_m^{t-1} \cdot x_m \udb{\left(\prod_{i=0}^{m-2}(\gamma(\overline{a^i},a))\right)}_{=1}.
\end{align*}
Therefore $\tau(a^{m-1}ba^nba^{-m}) = \gamma(a^{m-1},b)x_m^t \gamma(\ovl{m+n},b) x_m\inv = x_{m-1}x_m^t x_{m-1} x_m\inv$. This finishes the proof for the presentation of $H$.
\end{proof}

\section{No Acylindrically Hyperbolic Group Admits a Quasi-geodesic Regular Positive Cone}\label{acyl}

The goal of this section is to prove Theorem \ref{main-thm}. We will do so by showing that for every acylindrically hyperbolic group $G$ (see \cite{Osin2013} for an in-depth discussion) and for every quasi-geodesic language $L$ representing a subset $P$ of $G$, there is a subgroup isomorphic to the free group on two elements which is language-convex with respect to $L$ in $G$. If $P$ is a positive cone, this creates a contradiction to Theorem \ref{herm-sunic}. 

\subsection{Acylindrically hyperbolic groups}\label{acyl-hyp-grp}
A group $G$ has an \emph{acylindrical action} on a metric space $\mathcal{X}$ with distance $d$ if for all $\epsilon > 0$, there exists non-negative constants $R$ and $N$, both depending on $\epsilon$, such that for every two points $a,b \in \mathcal{X}$ satisfying $d(a,b) \geq R$, the set of elements $g \in G$ satisfying
$$d(a,ga) \leq \epsilon \quad \text{ and } \quad d(b,gb) \leq \epsilon$$ is at most $N$.
If a group admits a non-elementary, acylindrical, isometric action on a $\delta$-hyperbolic space (a space where all geodesic triangles have the property that the union of any two sides is contained in a $\delta$-neighbourhood of the third side), then the group is called $\emph{acylindrically hyperbolic}$. Some examples include the mapping class groups of closed, oriented surfaces, groups of outer automorphism of free groups, and free products.

\subsection{Quasi-geodesic positive cones}

Let $\lambda$ and $\epsilon$ be real constants such that $\lambda \geq 1$ and $\epsilon \geq 0$. Let $I$ be an interval of the real line. Let $\mathcal{X}$ be a metric space with metric $d$. A \emph{$(\lambda, \epsilon)$-quasi-geodesic} is a map $\gamma: I \to \mathcal{X}$ such that for all $a,b \in I$, we have the inequalities

$$\frac{|a - b|}{\lambda} - \epsilon \leq d(\gamma(a), \gamma(b)) \leq \lambda |a - b| + \epsilon.$$

Recall from Section \ref{word-ind-path} that words induce paths in the corresponding Cayley graph $\Gamma$. These paths may be viewed naturally as continuous maps. A $(\lambda, \epsilon)$-\emph{quasi-geodesic word} $w$ is a word which induces a $(\lambda,\epsilon)$-quasi-geodesic path $\gamma: [0,n] \to \Gamma$ where $n$ is the length of $w$. 

\begin{definition}[Quasi-geodesic positive cone language]\label{def-qgpc} Let $G = \langle X \rangle$, where $X$ is finite and closed under formal inversion, $X = X\inv$. Let $P$ be any positive cone for $G$. We say that $L$ is a \emph{quasi-geodesic positive cone language} if $L \subset X^*$ and $L$ satisfies the following two conditions.

\begin{enumerate} 
	\item Under the evaluation map $\pi: X^* \to G$ we have that $\pi(L) = P$.
	\item There exists some constants $\lambda$ and $\epsilon$ with $\lambda \geq 1$ and $\epsilon \geq 0$ for which every word $w \in L$ is a $(\lambda,\epsilon)$-quasi-geodesic word. 
\end{enumerate}
\end{definition}

Theorem \ref{main-thm} says that if $G$ is a finitely generated, acylindrically hyperbolic group which admits a quasi-geodesic positive cone language $L$, then $L$ cannot be accepted by any finite state automaton. The following lemma concerns the existence of a hyperbolically embedded subgroup (see \cite[Section 2.1]{Dahmani2011} for a rather long definition). It is not strictly necessary to know the definition of hyperbolically embedded to follow the next results.


\begin{lemma}\label{F_2-embed} If $G$ is an acylindrically hyperbolic group, then there exists a hyperbolically embedded subgroup $H$ of $G$ that is isomorphic to $F_2$, the free group of two elements.
\end{lemma}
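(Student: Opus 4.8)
The plan is to obtain $H$ from the structure theory of groups acting acylindrically on hyperbolic spaces, i.e.\ from the Dahmani--Guirardel--Osin machinery \cite{Dahmani2011}. First I would unpack the hypothesis: since $G$ is acylindrically hyperbolic, it admits a non-elementary acylindrical action by isometries on some hyperbolic space $\mathcal{X}$. By the classification of isometric actions on hyperbolic spaces, a non-elementary action of this type contains loxodromic isometries, and non-elementarity lets us choose two of them, say $a$ and $b$, that are \emph{independent}: their fixed-point pairs on the Gromov boundary $\partial\mathcal{X}$ are disjoint (equivalently, $\langle a\rangle$ and $\langle b\rangle$ are non-commensurable). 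Moreover, acylindricity of the action forces every loxodromic element, in particular $a$ and $b$, to be a WPD element. All of this is standard and already recorded in \cite{Dahmani2011} and \cite{Osin2013}.

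Next I would feed this into the DGO results for acylindrical (or WPD) actions: every loxodromic WPD element $g$ lies in a unique maximal virtually cyclic subgroup $E(g)$, and $E(g)$ is hyperbolically embedded in $G$; more to the point, from a pair of independent loxodromic WPD elements one can extract a hyperbolically embedded \emph{free} subgroup. Concretely, a ping-pong argument on $\partial\mathcal{X}$ (playing the attracting and repelling endpoints of $a$ against those of $b$) yields an integer $N$ for which $\langle a^N, b^N\rangle$ is free of rank two, and the relevant theorem of \cite{Dahmani2011} shows this free subgroup is hyperbolically embedded in $G$ (if $G$ has a non-trivial finite radical one instead obtains $F_2$ together with that radical as a hyperbolically embedded subgroup, and the copy of $F_2$ sits inside it). Setting $H:=\langle a^N,b^N\rangle\cong F_2$ then completes the proof.

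The ingredients that are genuinely routine here are the extraction of independent loxodromics from non-elementarity and the ping-pong argument giving freeness. The step that requires care --- and the reason the lemma is not purely formal --- is hyperbolic embeddedness itself: an arbitrary embedded copy of $F_2$ in $G$ need not be hyperbolically embedded (it need not even be almost malnormal in $G$), so one cannot simply invoke ``$G$ contains $F_2$''; one must use that $H$ is generated by sufficiently high powers of \emph{independent loxodromic WPD} elements and appeal to the precise hyperbolic-embedding criterion of Dahmani--Guirardel--Osin. Once that citation is in hand, the lemma follows at once.
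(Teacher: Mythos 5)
Your proof is correct and rests on the same source as the paper's: the Dahmani--Guirardel--Osin result that a group with a proper infinite hyperbolically embedded subgroup contains, for every $n$, a hyperbolically embedded subgroup isomorphic to $F_n \times K$, where $K$ is the maximal finite normal subgroup. The paper simply cites this packaged statement from Section 6.2 of Dahmani--Guirardel--Osin (reached via Osin's equivalence between acylindrical hyperbolicity and the existence of such a subgroup), whereas you unpack its internal mechanics --- independent loxodromic WPD elements, high powers, ping-pong --- which is more informative but not a logically different route. One caution about your parenthetical on the finite radical: if $K \neq 1$, the copy of $F_2$ sitting inside the hyperbolically embedded $F_2 \times K$ is \emph{not} itself hyperbolically embedded, since it is normalized (indeed centralized) by $K$ and therefore fails the almost malnormality that every infinite hyperbolically embedded subgroup must satisfy; so ``the copy of $F_2$ sits inside it'' does not close that case. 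The paper's own proof glosses over exactly the same point when it passes from $F_n \times N$ to $F_2$, and in the intended application it is harmless because the group carries a left order, hence is torsion-free and has trivial maximal finite normal subgroup --- but if you want your write-up to be airtight you should either add that hypothesis or note where it is used.
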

\begin{proof}

Osin proved in \cite[Theorem 1.2]{Osin2013} that $G$ being acylindrically hyperbolic is equivalent to containing a proper infinite hyperbolically embedded subgroup. All we need for this proof is the result of Dahmani, Guirardel and Osin in \cite[Section 6.2] {Dahmani2011} which is dependent on the existence of a proper infinite hyperbolically embedded subgroup in $G$. The result states that if $G$ contains a proper infinite hyperbolically embedded subgroup, then for any $n \in \nat$ there exists a subgroup $H \leq G$ such that $H$ is hyperbolically embedded in $G$ and $H \cong F_n \times N$, where $F_n$ is a free group of rank $n$ and $N$ is the maximal finite normal subgroup of $G$. In particular, there exists a hyperbolically embedded subgroup $H \leq G$ such that $H \cong F_2$.
\end{proof}
\begin{lemma}\label{H-L} If $H$ is a hyperbolically embedded subgroup of an acylindrically hyperbolic group $G$, then $H$ is language-convex with respect to every quasi-geodesic language $L$.
\end{lemma}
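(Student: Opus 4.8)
The plan is to unwind the definition of hyperbolically embedded and translate it into the metric statement required by Definition~\ref{lconvex}. Recall that $H \hookrightarrow_h G$ means there is a (possibly infinite) relative generating set $Y$ for $G$ so that $G = \langle X \sqcup Y\rangle$ and $H$ is quasi-isometrically and ``strongly'' embedded: the relative metric $\widehat{d}$ on $H$ (path metric in the Cayley graph $\Gamma(G, X \sqcup Y)$ with edges from $Y$ only, after deleting edges incident to $H$) is proper, and, crucially, $H$ satisfies a \emph{bounded coset penetration}-type property. The consequence we want is the standard one (see Dahmani--Guirardel--Osin \cite{Dahmani2011}, and also its use in Sisto's and Osin's work): \emph{quasi-geodesics in $\Gamma(G,X)$ with endpoints in $H$ stay uniformly close to $H$}. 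So first I would isolate this as the key input, citing the appropriate statement from \cite{Dahmani2011} (the geodesics-stay-close property for hyperbolically embedded subgroups), possibly in the form: for every $\lambda \geq 1, \epsilon \geq 0$ there is a constant $\kappa = \kappa(\lambda,\epsilon, H)$ such that any $(\lambda,\epsilon)$-quasi-geodesic in $\Gamma(G,X)$ joining two points of $H$ lies in the $\kappa$-neighbourhood of $H$.

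Granting that, the proof is short. Let $L$ be a $(\lambda,\epsilon)$-quasi-geodesic language, so there are fixed constants $\lambda \geq 1$, $\epsilon \geq 0$ such that every $w \in L$ induces a $(\lambda,\epsilon)$-quasi-geodesic path $p_w$ in $\Gamma = \Gamma(G,X)$. Suppose $w \in L$ with $\bar w \in H$. Then $p_w$ is a $(\lambda,\epsilon)$-quasi-geodesic from $1 \in H$ to $\bar w \in H$. Applying the cited property with the constant $R := \kappa(\lambda,\epsilon,H)$, every vertex of $p_w$ lies within distance $R$ of $H$ in $\Gamma$. Since $R$ depends only on $\lambda$, $\epsilon$, $G$, and $H$ — and not on the particular word $w$ — this $R$ is a uniform language-convexity parameter, and Definition~\ref{lconvex} is satisfied. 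Hence $H$ is language-convex with respect to $L$.

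The main obstacle is purely expository: the definition of hyperbolically embedded is technical, and the ``quasi-geodesics stay close'' statement, while well known to experts, must be quoted in a usable form with explicit quantifiers (in particular, one must be careful that the neighbourhood constant depends only on the quasi-geodesic constants and not on length, which is exactly what properness of $\widehat d$ together with the hyperbolicity of the associated space buys). A secondary subtlety is that one works in $\Gamma(G,X)$ rather than the relative Cayley graph $\Gamma(G, X \sqcup Y)$; here one uses that $X$ is finite so the two graphs are the same on vertices and the $X$-metric dominates the relative one, meaning closeness in $\Gamma(G,X)$ is the strong conclusion and follows from closeness in the relative graph together with the distance formula / geodesic structure. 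Once the right lemma from \cite{Dahmani2011} is pinned down, the argument is a one-paragraph application, and combining it with Lemma~\ref{F_2-embed} and Proposition~\ref{convex-crit} will then yield Theorem~\ref{main-thm} by contradiction with Theorem~\ref{herm-sunic}.
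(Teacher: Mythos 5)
Your proof is correct and follows essentially the same route as the paper: the key input is precisely the Morse property of hyperbolically embedded subgroups (quasi-geodesics with endpoints in $H$ stay in a uniform $R(\lambda,\epsilon)$-neighbourhood of $H$ in $\Gamma(G,X)$), and the rest is the one-paragraph application you describe. The only difference is sourcing: the paper quotes this fact directly from Sisto's theorem on quasi-convexity of hyperbolically embedded subgroups, which is stated for the word metric on $\Gamma(G,X)$, so the extraction from the Dahmani--Guirardel--Osin definition and the passage between the relative and absolute Cayley graphs that you flag as the main obstacles are not needed.
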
 

\begin{definition}[Morse property]
A subspace $\mathcal{Y}$ of a metric space $\mathcal{X}$ is said to be \emph{Morse} if for every $\lambda \geq 1$ and $\epsilon \geq 0$,  there exists a non-negative constant $R$ depending on $\lambda$ and $\epsilon$ with the property that all $(\lambda,\epsilon)$-quasi-geodesics in $\mathcal{X}$ whose endpoints are in $\mathcal{Y}$ are contained in the neighbourhood of radius $R$ around $\mathcal{Y}$. 
\end{definition}

\begin{proof}[Proof of lemma \ref{H-L}]
Our lemma is largely a consequence of Sisto's theorem in \cite[Theorem 2]{Sisto2013}, which says the following. Let $G$ be a finitely generated group and let $H$ be a finitely generated subgroup that is hyperbolically embedded. Let $\Gamma$ be the Cayley graph of $G$ with respect to the finite generating set $X$ such that $L \subseteq X^*$. The embedding of $H$ in $\Gamma$ has the Morse property.

Thus, there exists an $R = R(\lambda, \epsilon)$ such that for every $(\lambda, \epsilon)$-quasi-geodesic word $u$ with the property that $\bar u \in H$, the induced path lies within $R$ of the embedding of $H$. In particular, this shows that $H$ is language-convex with respect to $L$.
\end{proof}

\begin{corollary}\label{acyl-contra} Let $G$ be a finitely generated acylindrically hyperbolic group with positive cone $P$. If there exists a regular quasi-geodesic positive cone language $L$ representing $P$, then there exists a regular positive cone language for $F_2$. 
\end{corollary}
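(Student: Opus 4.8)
The plan is to combine the two pieces that have just been assembled: Lemma \ref{F_2-embed}, which produces a hyperbolically embedded copy $H \cong F_2$ inside the acylindrically hyperbolic group $G$, and Lemma \ref{H-L}, which tells us that such an $H$ is language-convex with respect to any quasi-geodesic language over any finite generating set of $G$. The language-convexity criterion (Proposition \ref{convex-crit}) then converts these facts into a regular language representing $H \cap P$, and the point is to observe that $H \cap P$ is exactly a positive cone for $H \cong F_2$.

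Concretely, I would argue as follows. Let $X$ be the finite generating set for $G$ with $X = X\inv$ such that $L \subseteq X^*$ and $\pi(L) = P$, with every word in $L$ a $(\lambda,\epsilon)$-quasi-geodesic word. By Lemma \ref{F_2-embed} there is a subgroup $H \le G$ with $H \cong F_2$ that is hyperbolically embedded in $G$; note $H$ is finitely generated, so Lemma \ref{H-L} applies and $H$ is language-convex with respect to $L$ (with some convexity parameter $R$). Now invoke Proposition \ref{convex-crit} with this $L$, $P = \pi(L)$, and the subgroup $H$: it yields a regular language $L_H \subseteq X^*$ with $\pi(L_H) = H \cap P$. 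Finally, recall from Section \ref{ord} that a positive cone is closed under passing to subgroups: if $P$ is the positive cone of a left order $\prec$ on $G$, then $H \cap P$ is the positive cone of the restricted order on $H$. Hence $L_H$ is a regular language representing a positive cone of $H \cong F_2$; pushing $L_H$ through the isomorphism $H \to F_2$ (a monoid homomorphism on the relevant generating words, which preserves regularity by Section \ref{reg-lang}) gives a regular positive cone language for $F_2$.

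There is essentially no hard obstacle here — the corollary is a packaging of the preceding lemmas — but the one point requiring a little care is bookkeeping about generating sets. Proposition \ref{convex-crit} and Lemma \ref{H-L} are stated for the \emph{same} finite generating set $X$ of $G$ that carries $L$; Definition \ref{def-qgpc} already builds in that $L \subset X^*$ for such an $X$, so there is no mismatch, but I would state explicitly that we work throughout with this $X$ and that the Cayley graph $\Gamma$ in Lemma \ref{H-L} is $\Gamma(G,X)$. The only other subtlety is to confirm that ``regular positive cone language for $F_2$'' in the conclusion means a regular language over \emph{some} finite generating set of $F_2$ representing a positive cone — which is exactly what Definition (Regular positive cone) requires, and which is contradicted by Theorem \ref{herm-sunic}. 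So the proof is just: apply Lemma \ref{F_2-embed}, then Lemma \ref{H-L}, then Proposition \ref{convex-crit}, then restrict the order and transport along the isomorphism.
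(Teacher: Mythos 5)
Your proposal is correct and follows exactly the same route as the paper: invoke Lemma \ref{F_2-embed} to obtain a hyperbolically embedded $H \cong F_2$, apply Lemma \ref{H-L} to get language-convexity, and then use Proposition \ref{convex-crit} to produce a regular language for $H \cap P$, which is a positive cone of $H$. Your additional remarks about keeping the generating set $X$ fixed and transporting the language along the isomorphism $H \to F_2$ are sensible bookkeeping that the paper leaves implicit.
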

\begin{proof}
By Lemma \ref{F_2-embed}, we may assume there exists a hyperbolically embedded subgroup $H$ which is isomorphic to $F_2$. The subgroup $H$ is language-convex with respect to $L$ by Lemma \ref{H-L}, which means by Proposition \ref{convex-crit} that $H \cap P$ is a regular positive cone for $H \cong F_2$.
\end{proof} 

However, Hermiller and \v{S}uni\'c's theorem (Theorem \ref{herm-sunic}) states that there is no regular language representing a positive cone of $F_2$, contradicting the assumption of Corollary \ref{acyl-contra}. This proves Theorem \ref{main-thm}. 

\clearpage

\end{document}